\documentclass[11pt, reqno, psamsfonts]{amsart}
\pdfoutput=1

\usepackage{amssymb}
\usepackage{amsthm}
\usepackage{amsmath}
\usepackage{latexsym}

\usepackage[T1]{fontenc}
\usepackage[utf8]{inputenc}
\usepackage[russian, french, english]{babel}

\usepackage{graphicx}
\usepackage{wrapfig}
\usepackage[justification=centering, labelfont=bf]{caption}
\usepackage{mathtools}
\usepackage{amsbsy}
\usepackage[inline]{enumitem}
\usepackage{mathrsfs}
\usepackage{array}
\usepackage{multicol}
\usepackage{stmaryrd}
\usepackage{cancel}
\usepackage{lmodern}
\usepackage{mathabx}
\usepackage{upgreek}
\usepackage{titlesec}
\usepackage{titletoc}
\usepackage[spacing=true,kerning=true,babel=true,tracking=true]{microtype}
\usepackage[shortcuts]{extdash}
\usepackage[foot]{amsaddr}
\usepackage[left=1in,right=1in,top=1in,bottom=1in,bindingoffset=0cm]{geometry}
\usepackage{bm}
\usepackage{centernot}
\usepackage{mdframed}
\usepackage[hidelinks]{hyperref}
\usepackage{xspace}
\usepackage[skins]{tcolorbox}
\usepackage{framed}
\usepackage{tikz}
\usetikzlibrary{shapes,snakes}
\usetikzlibrary{arrows.meta}
\usetikzlibrary{decorations.pathmorphing}
\usetikzlibrary{patterns}
\usepackage{float}
\usepackage{cleveref}
\usepackage{csquotes}

\usepackage[
    sortcites,
    backend=biber, style=alphabetic, sorting=nyt, maxnames=100,backref=true]{biblatex}
\title{\sffamily Hypergraph independence bounds: from maximum degree to average degree} 
\date{}

\author{Jing~Yu}
\address{\normalfont (JY) Shanghai Center for Mathematical Sciences, Fudan University, Shanghai, China}
\email{jyu@fudan.edu.cn}

\author{Junchi~Zhang}
\address{\normalfont (JZ) Shanghai Center for Mathematical Sciences, Fudan University, Shanghai, China}
\email{jczhang24@m.fudan.edu.cn}

\thanks{JY's research is partially supported by the National Natural Science Foundation of China grant 12371343 and 12525110 (PI: Hehui Wu).}

\newtheoremstyle{bfnote}%
{}{}%
{\slshape}{}%
{\bfseries}{\bfseries.}%
{ }%
{\thmname{#1}\thmnumber{ #2}\thmnote{ \ep{\normalfont{}#3}}}

\theoremstyle{bfnote}
\newtheorem{theo}{Theorem}[section]
\newtheorem*{theo*}{Theorem}
\newtheorem{prop}[theo]{Proposition}
\newtheorem{lemma}[theo]{Lemma}

\newtheorem{corl}[theo]{Corollary}

\newtheorem*{corl*}{Corollary}

\theoremstyle{definition}
\newtheorem{defn}[theo]{Definition}
\newtheorem*{defn*}{Definition}

\newtheorem{remk}[theo]{Remark}

\newtheorem*{exmp*}{Example}

\theoremstyle{remark}
\newtheorem*{ques*}{Question}
\newtheorem*{remk*}{Remark}

\newcommand*{\myproofname}{Proof}

\makeatletter
\newcommand{\neutralize}[1]{\expandafter\let\csname c@#1\endcsname\count@}
\makeatother

\newcommand{\N}{{\mathbb{N}}}

\newcommand{\G}{\mathcal{G}}
\renewcommand{\epsilon}{\varepsilon}
\newcommand{\eps}{\epsilon}
\renewcommand{\phi}{\varphi}
\renewcommand{\theta}{\vartheta}
\renewcommand{\leq}{\leqslant}
\renewcommand{\geq}{\geqslant}
\newcommand{\defeq}{\coloneqq}

\newcommand{\bemph}[1]{{\normalfont#1}}
\newcommand{\ep}[1]{\bemph{(}#1\bemph{)}}

\newcommand{\emphd}[1]{{\fontseries{b}\selectfont\textsf{#1}}}

\numberwithin{equation}{section}

\titleformat{\section}[block]{\large\bfseries\sffamily}{\thesection.}{1ex}{}
\titleformat{\subsection}[block]{\bfseries\sffamily}{\thesubsection.}{1ex}{}
\titleformat{\subsubsection}[block]{\itshape}{\bfseries\upshape\sffamily\thesubsubsection.}{1ex}{}

\titlespacing*{\section}{0pt}{*3}{*1}
\titlespacing*{\subsection}{0pt}{*3}{*1}
\titlespacing*{\subsubsection}{0pt}{*2}{*1}

\titlecontents{section}
 [1.5em]
 {\smallskip}
 {\bfseries\thecontentslabel\hspace{1.02em}}
{\bfseries}
 {\,\,\titlerule*[0.77pc]{}\bfseries\contentspage}
\titlecontents{subsection}
 [4em]
 {\smallskip}
 {\thecontentslabel\hspace{1.02em}}
{\hspace*{2.32em}}
 {\,\,\titlerule*[0.77pc]{.}\contentspage}

\renewbibmacro{in:}{}

\renewbibmacro*{volume+number+eid}{%
	\printfield{volume}%
	\setunit*{\addnbspace}%
	\printfield{number}%
	\setunit{\addcomma\space}%
	\printfield{eid}}

\DeclareFieldFormat[article]{volume}{\textbf{#1}\space}
\DeclareFieldFormat[article]{number}{\mkbibparens{#1}}

\DeclareFieldFormat{journaltitle}{#1,}
\DeclareFieldFormat[thesis]{title}{\mkbibemph{#1}\addperiod}
\DeclareFieldFormat[article, unpublished, thesis]{title}{\mkbibemph{#1},}
\DeclareFieldFormat[book]{title}{\mkbibemph{#1}\addperiod}
\DeclareFieldFormat[unpublished]{howpublished}{#1, }

\DeclareFieldFormat{pages}{#1}
\DeclareFieldFormat[article]{series}{Ser.~#1\addcomma}

\setlist{topsep=3pt,itemsep=3pt}

\begin{document}

\begin{abstract}
   We prove a transfer theorem for hereditary classes of $(r+1)$-uniform hypergraphs. Let $\mathcal H$ be such a class, and for $H\in\mathcal H$ write $\Delta(H)$ and $d(H)$ for the maximum degree and average degree of $H$, respectively. We show that, for every nearly logarithmic function $f$ in the sense defined below, a maximum-degree lower bound for the independence number of the form
\[
\alpha(H)\ge (1-o(1))\frac{f(\Delta(H))}{\Delta(H)^{1/r}}|V(H)|
\qquad\text{as }\Delta(H)\to\infty
\]
for all $H\in\mathcal H$ implies the corresponding average-degree lower bound
\[
\alpha(H)\ge (1-o(1))\frac{f(d(H))}{d(H)^{1/r}}|V(H)|
\qquad\text{as }d(H)\to\infty .
\]
We combine this transfer theorem with known coloring and fractional-coloring bounds to obtain consequences for graphs excluding a fixed cycle, graphs with bounded clique number, locally $q$-colorable graphs, and locally sparse uniform hypergraphs. 
\end{abstract}
\maketitle

\section{Introduction}\label{sec:intro}

Let $H$ be an $(r+1)$-uniform hypergraph. We write $\Delta(H)$ and $d(H)$ for its maximum degree and average degree, respectively. The elementary random deletion argument gives
\[
\alpha(H)=\Omega\left(\frac{|V(H)|}{d(H)^{1/r}}\right),
\]
and the analogous maximum-degree bound with $d(H)$ replaced by $\Delta(H)$ is immediate.
A recurring theme in extremal graph theory is that additional local structure can improve these bounds, often by a logarithmic or polylogarithmic factor.

A classical example is the triangle-free case. Ajtai, Komlós, and Szemerédi~\cite{Ajtai1980Ramsy} proved that every triangle-free graph $G$ of maximum degree $\Delta$ has an independent set of size at least
\[
c\,\frac{\log\Delta}{\Delta}|V(G)|
\]
for an absolute constant $c>0$. Shearer~\cite{Shearer1983ind1} later proved the corresponding average-degree estimate
\[
\alpha(G)\ge (1-o_d(1))\frac{\log d}{d}|V(G)|
\]
for any triangle-free graph $G$ of average degree $d$.
The passage from maximum degree to average degree is not formal: a graph of small average degree may contain vertices of very large degree, and arguments designed for bounded maximum degree often do not apply directly.
Over the decades, there are plenty of results related to independence number under such constraints, such as \cite{Spencer1982hyper,Rodl1995hyper,alon1999coloring,Anton2023ForbidBipartite,Verstraete2026hyper}. Some of them are in terms of maximum degree and some are in terms of average degree.

The purpose of this note is to isolate a simple transfer principle that handles this difficulty for uniform hypergraphs.We shall use the term \emphd{hereditary} in the induced sense: a class of hypergraphs is hereditary if every induced subhypergraph of a member of the class again belongs to the class. For graphs, this is the usual notion of being closed under taking induced subgraphs. We say that a graph or hypergraph $H$ is $F$-free if it contains no subgraph isomorphic to $F$; unless explicitly stated otherwise, subgraphs are not required to be induced.

Suppose that, on a hereditary class of $(r+1)$-uniform hypergraphs, one knows a maximum-degree lower bound of the form
\[
\alpha(H)\ge (1-o_\Delta(1))\frac{f(\Delta)}{\Delta^{1/r}}|V(H)|.
\]
Under a mild slow-variation assumption on $f$, we show that the corresponding average-degree bound
\[
\alpha(H)\ge (1-o_d(1))\frac{f(d)}{d^{1/r}}|V(H)|
\]
follows automatically.

The proof is a cleaning argument. Starting from $H$, we repeatedly delete a vertex whose degree is larger than a fixed multiple of the current average degree. The quantity
\[
\frac{|V(H_i)|}{d(H_i)^{1/r}}
\]
increases during this process. Thus either the average degree drops enough that a crude independent-set bound suffices, or many vertices have been deleted and this quantity has grown enough, or the process stops at an induced subhypergraph whose maximum degree is comparable to its average degree. In the last case the assumed maximum-degree theorem applies. Our cleaning argument is inspired by the recent work of Dhawan, Janzer, and Methuku \cite{Abhi2025Kttt} on $K_{t,t,t}$-free graphs, where a novel high-degree deletion procedure is combined with a Rödl nibble to obtain sharp average-degree independence bounds. Here we isolate the deterministic part of this idea and formulate it as a general maximum-degree-to-average-degree transfer theorem for hereditary uniform hypergraph classes.

We then combine the transfer theorem with known maximum-degree coloring and independence bounds to obtain average-degree consequences for several graph and hypergraph classes. The proofs of these consequences are given in Section \ref{sec:apps}; the proof of the transfer theorem is given in Section \ref{sec:proof}.

To state the transfer theorem, we isolate the following class of functions.

\begin{defn}\label{def:nearly-log}
A function $f\colon (0, \infty) \to (0, \infty)$ is \emphd{nearly logarithmic} if the following hold: 
\begin{enumerate}[label=\ep{\normalfont{}C\arabic*}]
    \item \label{item:C1} $f(x)=x^{o(1)}$ as $x\to +\infty \quad \text{and}\quad \liminf_{x\to \infty}f(x)>2.$
    \item \label{item:C2} For any $\eps>0$ and $m\geq 0$, there exist $M>0$ and $\lambda>1$ such that for every $x>M$,
    \[\inf_{y\in[xf(x)^{-m},\lambda x]}\frac{f(y)}{f(x)}\ge 1-\varepsilon.\]
\end{enumerate}
\end{defn}

With this definition, our main theorem is stated as follows.

\begin{theo}\label{thm:main}
    Fix $r\geq 1$, let $f$ be a nearly logarithmic function, and let $\G$ be a hereditary class of $(r+1)$-uniform hypergraphs. Suppose that for every $\delta > 0$ there exists $\Delta_0$ such that every hypergraph $H\in\G$ with $\Delta(H) \ge \Delta_0$ satisfies
    $$ \alpha(H)\geq (1-\delta)  \dfrac{f(\Delta(H))}{\Delta(H)^{1/r}} |V(H)|.$$
    Then for every $\epsilon > 0$ there exists $d_0$ such that every $H\in \G$ with $d(H) \ge d_0$ satisfies
    $$ \alpha(H)\geq (1-\epsilon)  \dfrac{f(d(H))}{d(H)^{1/r}} |V(H)|.$$
\end{theo}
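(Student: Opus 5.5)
Our plan is to run the cleaning argument sketched in the introduction and make it quantitative. Fix $\epsilon>0$ and let $H\in\G$ have $n$ vertices, $e$ edges and average degree $d=d(H)=(r+1)e/n$. Choose $\lambda>1$ close to $1$ and $M$ from \ref{item:C2}, applied with a small $\eps'$ in place of $\eps$ and a suitable exponent $m$ (we will take $m=r$). Set $H_0=H$. While the current hypergraph $H_i$ has a vertex of degree larger than $\lambda d(H_i)$, delete one such vertex to obtain $H_{i+1}$. Each $H_i$ is an induced subhypergraph of $H$, so it lies in $\G$, and any independent set of $H_i$ is independent in $H$.

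The first step is to show that the potential $\Phi(H_i)=|V(H_i)|/d(H_i)^{1/r}$ increases, and by a definite factor. Write $n_i=|V(H_i)|$ and $e_i=|E(H_i)|$. Deleting a vertex of degree $D>\lambda d_i$ removes $D$ edges, so
\[
e_{i+1}\le e_i-\lambda d_i=e_i\bigl(1-\lambda(r+1)/n_i\bigr).
\]
Since $\Phi\propto n^{1+1/r}e^{-1/r}$, the ratio $\Phi_{i+1}/\Phi_i$ is at least
\[
(1-1/n_i)^{1+1/r}\bigl(1-\lambda(r+1)/n_i\bigr)^{-1/r}\ge 1+c_\lambda/n_i,
\]
where $c_\lambda=(\lambda-1)(r+1)/r-O(1/n_i)>0$. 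Multiplying over the steps gives $\Phi_k\ge\Phi_0\,(n/n_k)^{c'}$ for some $c'=c'(\lambda,r)>0$. Also $d_i$ is decreasing up to negligible terms, since we always remove more than the average number of edges per vertex.

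Next we split into cases according to where the process stops, at step $k$.

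\textbf{Case (a): the average degree falls below $d f(d)^{-m}$ at some step.} Take the first such $i$. The elementary random deletion bound gives $\alpha(H_i)\ge c_r\Phi_i\ge c_r\Phi_0 f(d)^{m/r}$. With $m=r$ this is $c_r f(d)\cdot f(d)^{0}\Phi_0$, which is still short by the constant $c_r$. So we instead take $m$ with $m/r>1$, say $m=2r$. Then $c_r f(d)^{2}\gg f(d)$ as long as $f(d)\to\infty$. If $f$ stays bounded, this step has to be handled by the growth of $\Phi$ from Case (b) instead. We expect the interplay between cases (a) and (b) to be the main obstacle.

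\textbf{Case (b): many vertices are deleted, say $n_k\le\eta n$.} Then $\Phi_k\ge\eta^{-c'}\Phi_0$. Here we can apply either the crude bound or the maximum-degree hypothesis to $H_k$. The crude bound already beats $(1-\epsilon)f(d)\Phi_0$ only if $\eta^{-c'}c_r\ge f(d)$, and this is not uniform in $d$.

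\textbf{Case (c): the process stops with $d_k\in[d f(d)^{-m},d]$.} Then $\Delta(H_k)\le\lambda d_k$, and $d_k\to\infty$ by \ref{item:C1}. The hypothesis gives
\[
\alpha(H_k)\ge(1-\delta)\,f(\Delta_k)\,n_k\,\Delta_k^{-1/r}\ge(1-\delta)\lambda^{-1/r}f(\Delta_k)\,\Phi_k.
\]
Since $\Delta_k\in[d_k,\lambda d]\subseteq[d f(d)^{-m},\lambda d]$, condition \ref{item:C2} gives $f(\Delta_k)\ge(1-\eps')f(d)$. Together with $\Phi_k\ge\Phi_0$ this yields the claim once $\lambda$, $\delta$ and $\eps'$ are small enough.

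It remains to unify cases (a) and (b), and our plan for this is as follows. We never stop at "many deletions"; we stop only when $\Delta\le\lambda d_i$ or when $d_i<d f(d)^{-m}$. In the second case the potential has grown by $(n/n_i)^{c'}$. We also have $d_i\ge d(n_i/n)^{?}$-type relations: since $e_i\le e$, we get $d_i\le d\,n/n_i$, so $n/n_i\ge d/d_i\ge f(d)^{m}$. Hence $\Phi_i\ge f(d)^{mc'}\Phi_0$, and the crude bound gives $\alpha\ge c_r f(d)^{mc'+m/r}\Phi_0$, using also the earlier factor $(d/d_i)^{1/r}$ implicit in $\Phi$. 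Choosing $m$ large in terms of $c'$ and $r$ makes the exponent exceed $1$. The condition $\liminf f>2$ from \ref{item:C1} is then what we will use to absorb the constant $c_r$ (for instance $c_r\ge1/2$ in a suitable form of the deletion bound), possibly after also enlarging $m$. Checking that this constant bookkeeping closes is the delicate point of the argument.
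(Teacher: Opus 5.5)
Your overall architecture matches the paper's: the potential $\Phi_i$, its growth $\Phi_{i+1}\ge(1+c'/n_i)\Phi_i$, and a balanced stop handled by the maximum-degree hypothesis and (C2). Your Case (c) is fine apart from the parameter issue noted at the end. The gap is in the stop where the average degree becomes small, which is exactly where your unification breaks.

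The step ``since $e_i\le e$, we get $d_i\le d\,n/n_i$, so $n/n_i\ge d/d_i$'' is a non sequitur. From $d_in_i=(r+1)e_i\le(r+1)e=dn$ you only get $n/n_i\ge d_i/d$, which is vacuous since $d_i<d$. The inequality $n/n_i\ge d/d_i$ is false in general. In $K_{s,n-s}$ with $s\ll n$, the process deletes small-side vertices (each of degree $n-s$) one by one, so $d_i$ falls below $df(d)^{-m}$ while still $n_i\ge n-s\approx n$.

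Your final bound $c_rf(d)^{mc'+m/r}\Phi_0$ also double counts. $\Phi_i$ is a single number with two separate lower bounds:
\begin{itemize}
\item $\Phi_i=(n_i/n)(d/d_i)^{1/r}\Phi_0>(n_i/n)f(d)^{m/r}\Phi_0$, useful when few vertices were deleted (your first version of Case (a) silently dropped the factor $n_i/n$);
\item $\Phi_i\ge 2^{-c'}(n/n_i)^{c'}\Phi_0$, useful when many were deleted.
\end{itemize}
Only their maximum is available, not their product.

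The missing idea is to measure ``many deletions'' on the scale of $f(d)$ rather than by a fixed fraction; the fixed fraction is why your Case (b) was not uniform in $d$. Split according to whether $n/n_i\le f(d)^a$ or $n/n_i>f(d)^a$. In the first case $\Phi_i>f(d)^{m/r-a}\Phi_0$; in the second $\Phi_i>2^{-c'}f(d)^{ac'}\Phi_0$. Choose $a$ with $ac'\ge c'+2$ and then $m\ge r(a+3)$. This slack, together with $f(d)>2$, absorbs the constants $r/(r+1)$ and $2^{-c'}$ in the crude bound. This is precisely the paper's stopping rule (S2), with $ac-c=2$ and $b/r-a=3$.

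Making this split a stopping rule also disposes of the subcase $d_i<1$, which you do not treat. There the crude bound gives only $\frac{r}{r+1}n_i$, so you need a lower bound on $n_i$. The paper gets $n_i\ge n/(2f(d)^a)$ from (S2) having failed one step earlier, and combines it with $d^{1/r}>4f(d)^{a+1}$.

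Finally, your parameter order is circular. You need $m$ large in terms of $c'$, hence in terms of $\lambda$, but the $\lambda$ supplied by (C2) depends on $m$. The paper breaks this circularity in three steps:
\begin{enumerate}
\item obtain the right window $[x,\lambda_0x]$ from (C2) with $m=0$;
\item fix the deletion threshold $\lambda<\lambda_0$ (also keeping $c'\le 1$), then define $a$ and the exponent;
\item only then take the left window $[xf(x)^{-b},x]$ from (C2) with $m=b$, discarding the $\lambda$ that comes with it.
\end{enumerate}
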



\section{Nearly logarithmic functions and applications}\label{sec:apps}

This section contains the ingredients needed to apply Theorem~\ref{thm:main}. We first give a simple criterion for checking that a function is nearly logarithmic. We then combine the transfer theorem with known maximum-degree and degeneracy bounds to obtain the stated applications.

\subsection{A sufficient condition}

The definition of a nearly logarithmic function is tailored to the cleaning argument, but in the applications it is usually verified through a simpler differential condition.

\begin{prop}\label{prop:easy-nearly-log}
Let $f:(0,\infty)\to (0, \infty)$ be eventually increasing and continuously differentiable.
Assume that
\[
f(x)=x^{o(1)},\qquad \liminf_{x\to\infty}f(x)>2,
\]
and
\[
\lim_{x \to \infty} \frac{x f'(x)}{f(x)}\log f(x) = 0.
\]
Then $f$ is nearly logarithmic.
\end{prop}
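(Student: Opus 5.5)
Condition \ref{item:C1} is assumed verbatim, so only \ref{item:C2} needs proof. Fix $\eps>0$ and $m\ge 0$. The plan is to control $\log f(y)-\log f(x)$ by integrating the logarithmic derivative, using the substitution $t=\log u$. Choose $M_1$ such that $f$ is increasing and $C^1$ on $[M_1,\infty)$ and $f>2$ there. Put $g(u)=\frac{uf'(u)}{f(u)}\ge 0$ for $u\ge M_1$. The hypothesis says $g(u)\log f(u)\to 0$. Since $\log f(u)\ge \log 2>0$, this also gives $g(u)\to 0$.

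For the upper end of the interval, $y\in[x,\lambda x]$, monotonicity gives $f(y)\ge f(x)$ immediately, so any $\lambda>1$ works (for example $\lambda=2$). Only the lower end $y\in[xf(x)^{-m},x]$ needs work. First make sure $y$ stays in the good region. By \ref{item:C1}, $f(x)^m=x^{o(1)}$, so $xf(x)^{-m}\ge x^{1/2}\ge M_1$ once $x$ is large. Then
\[
\log f(x)-\log f(y)=\int_{\log y}^{\log x} g(e^t)\,dt\le (\log x-\log y)\sup_{u\in[y,x]} g(u)\le m\log f(x)\cdot\sup_{u\in[y,x]} g(u).
\]

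The key step, and the only delicate one, is that the supremum of $g$ is taken over $u\in[y,x]$, while the factor in front is $\log f(x)$ rather than $\log f(u)$. Monotonicity rescues this: for $u\le x$ we have $\log f(u)\le \log f(x)$, so it cannot be used directly to turn $\log f(x)$ into $\log f(u)$. Instead, it is better to bound $\log f(x)$ via $f(y)$. Write $\eta(u)=g(u)\log f(u)$, which tends to $0$. Then
\[
\frac{d}{dt}\bigl(\log f(e^t)\bigr)^2=2\eta(e^t).
\]
Hence, on $[\log y,\log x]$,
\[
(\log f(x))^2-(\log f(y))^2\le 2\bigl(\log x-\log y\bigr)\sup_{u\ge y}\eta(u)\le 2m\log f(x)\,\sup_{u\ge \sqrt x}\eta(u).
\]
Dividing by $\log f(x)+\log f(y)\ge \log f(x)$ gives
\[
\log f(x)-\log f(y)\le 2m\sup_{u\ge\sqrt x}\eta(u),
\]
which tends to $0$ as $x\to\infty$. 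So for $M$ large this difference is at most $-\log(1-\eps)$, i.e. $f(y)/f(x)\ge 1-\eps$ uniformly in $y$. This proves \ref{item:C2}. The main obstacle is exactly this mismatch between the two logarithms, and I would handle it by the squared-logarithm trick just described.
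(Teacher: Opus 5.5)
Your proof is correct and is essentially the paper's argument. The right window is handled by eventual monotonicity with $\lambda=2$. The left window is handled by integrating $\frac{d}{dt}(\log f(e^t))^2=2\eta(e^t)$ over a logarithmic length at most $m\log f(x)$ and dividing by $\log f(x)+\log f(y)\ge\log f(x)$; your $\eta$ is exactly the paper's $A$.

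The only blemishes are expository. The first naive bound and the sentence beginning ``Monotonicity rescues this'' are superfluous, and that sentence contradicts itself. The case $\varepsilon\ge 1$, where $-\log(1-\varepsilon)$ is undefined, is trivial and should simply be set aside.
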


\begin{proof}
Condition \ref{item:C1} is immediate from the assumptions.

We verify condition \ref{item:C2}. Fix $\varepsilon>0$ and $m\ge0$. If $m=0$, the conclusion follows immediately from eventual monotonicity by taking any fixed $\lambda>1$. Thus assume $m>0$. We may also assume $0<\varepsilon<1$. Choose $\delta>0$ so small that
\[
e^{-2m\delta}\ge 1-\varepsilon.
\]
Since $f$ is eventually increasing, there exists $x_0$ such that $f$ is increasing on
$[x_0,\infty)$. Set
\[
u(x):=\log f(x).
\]
Since $\liminf_{x\to\infty}f(x)>2$, after increasing $x_0$ if necessary, there exists
$\gamma>0$ such that
\[
u(x)\ge \gamma
\qquad\text{for all }x\ge x_0.
\]
Moreover, as $f$ is increasing and continuously differentiable on $[x_0,\infty)$, we have
\[
u'(x)\ge0
\qquad\text{for all }x\ge x_0.
\]

For each sufficiently large $x$, define
\[
y_x:=xf(x)^{-m}=xe^{-m u(x)}.
\]
Since $f(x)=x^{o(1)}$, we have
\[
u(x)=\log f(x)=o(\log x).
\]
Therefore
\[
\log y_x=\log x-mu(x)=\log x-o(\log x)\to\infty.
\]
In particular, $y_x\to\infty$.

Let
\[
A(t):=t u'(t)u(t)
=
\frac{t f'(t)}{f(t)}\log f(t).
\]
By assumption, $A(t)\to0$. Hence, after increasing $x$ if necessary, we have
\[
A(t)\le \delta
\qquad\text{for all }t\in[y_x,x].
\]
For such $x$, we estimate
\[
u(x)^2-u(y_x)^2
=
\int_{y_x}^{x}2u(t)u'(t)\,\mathrm{d}t
=
\int_{y_x}^{x}\frac{2A(t)}{t}\,\mathrm{d}t
\le
2\delta\log\frac{x}{y_x}.
\]
By the definition of $y_x$,
\[
\log\frac{x}{y_x}=m u(x).
\]
Thus
\[
u(x)^2-u(y_x)^2\le 2m\delta u(x).
\]
Since $u$ is increasing on $[x_0,\infty)$ and $y_x\to\infty$, for all sufficiently large $x$ we have
$u(y_x)\le u(x)$ and hence
\[
u(x)-u(y_x)
=
\frac{u(x)^2-u(y_x)^2}{u(x)+u(y_x)}
\le
\frac{2m\delta u(x)}{u(x)}
=
2m\delta.
\]
It follows that for every $y\in[y_x,x]$,
\[
0\le u(x)-u(y)\le u(x)-u(y_x)\le 2m\delta.
\]
Consequently,
\[
\frac{f(y)}{f(x)}
=
\exp(u(y)-u(x))
\ge e^{-2m\delta}
\ge 1-\varepsilon
\tag{2.1}\label{eq:left}
\]
for every $y\in[xf(x)^{-m},x]$ and all sufficiently large $x$.

Now choose, for instance, $\lambda=2$. Since $f$ is increasing on $[x_0,\infty)$, for every
$y\in[x,\lambda x]$ and all sufficiently large $x$ we have
\[
\frac{f(y)}{f(x)}\ge 1.
\tag{2.2}\label{eq:right}
\]
Combining \eqref{eq:left} and \eqref{eq:right}, we obtain
\[
\frac{f(y)}{f(x)}\ge 1-\varepsilon
\]
for every $y\in[xf(x)^{-m},\lambda x]$ and all sufficiently large $x$. This proves condition \ref{item:C2}.
\end{proof}

\begin{remk}
Proposition~\ref{prop:easy-nearly-log} applies to
\[
\log x,\qquad
(\log x)^\beta,\qquad
\frac{\log x}{\log\log x},\qquad
\log\log x,
\]
after modifying these functions arbitrarily on a bounded interval to make them positive on
$(0,\infty)$. It also applies to
\[
\exp((\log x)^\alpha),\qquad 0<\alpha<\frac12.
\]
Indeed, for $f(x)=\exp((\log x)^\alpha)$ one has
\[
\frac{x f'(x)}{f(x)}\log f(x)
=
\alpha(\log x)^{2\alpha-1}\to0
\]
precisely when $\alpha<1/2$.
\end{remk}

We now turn to applications. In each case, an existing coloring or fractional-coloring theorem gives a maximum-degree lower bound for the independence number; Theorem~\ref{thm:main} then converts it into the corresponding average-degree bound.

\subsection{Graphs excluding fixed cycles and cliques}

We begin with graph classes covered by results of Davies, Kang, Pirot, and Sereni~\cite{DKPS2020}. Their chromatic-number bounds imply maximum-degree lower bounds on the independence number through the elementary inequality
\[
\alpha(G)\ge \frac{|V(G)|}{\chi(G)}.
\]

\begin{theo}[{\cite[Theorem~4]{DKPS2020}}]\label{thm:dkps-cycle}
     For every fixed integer $k\ge 3$, every $C_k$-free graph $G$ of maximum degree $\Delta$ satisfies
     \[\chi(G) \leq (1 + o_{\Delta}(1))\frac{\Delta}{ \log \Delta}.\]
\end{theo}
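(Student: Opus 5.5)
Since this theorem is quoted from \cite{DKPS2020}, the plan is to reconstruct its proof along the lines of the \emph{local occupancy} method. Fix $k\ge3$ and let $G$ be $C_k$-free with maximum degree $\Delta$. The structural input is simple: for every vertex $v$, the neighbourhood $G[N(v)]$ contains no path on $k-1$ vertices, since such a path together with $v$ would close a $C_k$. By the Erd\H{o}s--Gallai theorem, every induced subgraph $F$ of $G[N(v)]$ therefore has $e(F)\le \frac{k-3}{2}|V(F)|$. So all neighbourhoods are very sparse, uniformly in $\Delta$.

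\textbf{Local occupancy.} Let $I$ be drawn from the hard-core model on $G$ at fugacity $\lambda=\lambda(\Delta)$, chosen later. Fix $v$ and condition on $I\setminus N[v]$. Let $U\subseteq N(v)$ be the set of neighbours not blocked by this conditioning, and put $F=G[U]$. Then:
\begin{itemize}
\item $I\cap N[v]$ is either $\{v\}$, with probability proportional to $\lambda$, or a hard-core sample on $F$, with total weight $Z_F(\lambda)$;
\item hence $\Pr(v\in I)=\lambda/(\lambda+Z_F)$;
\item and $\mathbb E|I\cap N(v)| = \frac{Z_F}{\lambda+Z_F}\cdot \lambda(\log Z_F)'$.
\end{itemize}
For the sparse graph $F$, a Bonferroni or correlation estimate gives
\[
\log Z_F(\lambda)\ge |U|\log(1+\lambda)-\lambda^2 e(F)\ge |U|\bigl(\lambda - O_k(\lambda^2)\bigr).
\]
This yields the analogous bound $\lambda(\log Z_F)'\ge(1-O_k(\lambda))\,\lambda|U|/(1+\lambda)$. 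So, up to a factor $1-O_k(\lambda)$, $F$ behaves like an edgeless graph.

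Set $u=|U|$ and optimise the standard trade-off: find $\beta,\gamma>0$ with
\[
\beta\Pr(v\in I)+\gamma\,\mathbb E|I\cap N(v)|\ge 1
\]
for every value of $u\in[0,\Delta]$. Take $\lambda\to0$ slowly, for instance $\lambda=1/\log\Delta$. The computation is the same one as in Molloy's triangle-free argument, with error terms $O_k(\lambda)=o(1)$. It gives the condition with $\beta+\gamma\Delta\le(1+o(1))\Delta/\log\Delta$.

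\textbf{From occupancy to colouring.} Next invoke the local occupancy colouring theorem of Davies--Kang--Pirot--Sereni. Taking expectations and using linear programming duality, the inequality above immediately gives a fractional colouring with $(1+o(1))\Delta/\log\Delta$ colours, and in particular the independence bound. To reach the chromatic number, one runs the list-colouring version: either Molloy's entropy-compression argument or a R\"odl-nibble or semi-random procedure. At each round, a partial colouring is sampled from local hard-core models on the remaining lists. The occupancy inequality is then applied with respect to the current list sizes and degrees. The $C_k$-freeness is used again here, but only through the heredity of the neighbourhood sparsity: uncoloured subgraphs are still $C_k$-free.

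\textbf{Main obstacle.} I expect the hard step to be this last conversion with the sharp constant $1$. One has to show that list sizes and colour-degrees stay concentrated throughout the process and that the $o(1)$ losses accumulate to only $o(1)$ overall. This requires bounded-dependence concentration: Talagrand's inequality, or entropy compression for the final step. My first attempt would be to follow Molloy's entropy-compression framework verbatim, replacing the triangle-free calculation by the sparse-neighbourhood estimate above. The only new ingredient would then be the lower bound on $\log Z_F$, whose error term $\lambda^2 e(F)=O_k(\lambda^2 |U|)$ is negligible precisely because $\lambda\to0$.
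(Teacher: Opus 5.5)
\textbf{Verdict.} The paper gives no proof of this statement; it is imported from \cite{DKPS2020}. The only consequence used later (in \Cref{cor:cyclefree}) is $\alpha(G)\ge |V(G)|/\chi(G)\ge(1-o(1))\frac{\log\Delta}{\Delta}|V(G)|$. Your local-occupancy argument proves that consequence correctly. The gap is the passage to $\chi(G)$, which is what the statement actually asserts.

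\textbf{What works.} The Erd\H{o}s--Gallai input, the conditioning on $I\setminus N[v]$, and the trade-off with $\lambda=1/\log\Delta$ do give $\beta+\gamma\Delta=(1+o(1))\Delta/\log\Delta$. Averaging $\beta\Pr(v\in I)+\gamma\,\mathbb E|I\cap N(v)|\ge 1$ over $v$, and using $\sum_v|I\cap N(v)|=\sum_{x\in I}\deg(x)\le\Delta|I|$, then gives $\alpha(G)\ge\mathbb E|I|\ge|V(G)|/(\beta+\gamma\Delta)$ directly, with no duality needed. One repair: a lower bound on $\log Z_F$ does not by itself bound $\lambda(\log Z_F)'$, and you do not need it. What you actually use is $Z_F\le(1+\lambda)^u$, $Z_F\ge 1$, and an occupancy bound for $F$. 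That bound follows directly, with $S$ the hard-core sample on $F$, from $\Pr(w\in S)=\frac{\lambda}{1+\lambda}\Pr\bigl(S\cap N_F(w)=\emptyset\bigr)\ge\frac{\lambda}{1+\lambda}\bigl(1-\frac{\lambda}{1+\lambda}\deg_F(w)\bigr)$. Sum this over $w\in U$ and use $e(F)\le\frac{k-3}{2}u$.

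\textbf{The gap.} You flag the step to $\chi(G)$ as the main obstacle but do not carry it out, and the route you propose would not work as described.

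\begin{itemize}
\item Molloy's argument never uses the hard-core model on $G$. Its key lemma concerns a uniformly random proper partial list colouring of $N(v)$, conditioned on the colouring outside $N(v)$.
\item In a triangle-free graph that conditional law is a product measure, because $N(v)$ is independent. Both the expectation trade-off (surviving colours versus colour-degrees) and the concentration bounds are derived from this product structure.
\item For $C_k$-free graphs with $k\ge 4$, $G[N(v)]$ can span up to $\frac{k-3}{2}|N(v)|$ edges. For $k\ge 5$ its components need not have bounded size: $K_{1,1,s}$ is $C_k$-free for all $k\ge5$, yet a neighbourhood contains the star $K_{1,s}$.
\item So the conditional law is a list-colouring Gibbs measure with genuine dependencies. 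You would need an expectation estimate for that measure (a colouring analogue of your bound for $F$, not a bound on the hard-core $Z_F$ of $G$). You would also need a concentration argument for dependent colours.
\end{itemize}

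Consequently, the claim that "the only new ingredient would be the lower bound on $\log Z_F$" is not right. To close the gap you must either supply that dependent-colour analysis, or cite a general theorem that converts a local occupancy condition into a proper (even correspondence) colouring with only a $(1+o(1))$ loss, and check its exact hypotheses. That conversion is the substantial part of the cited result, not a routine corollary of the fractional bound.
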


For a graph $G$, the \emphd{Hall ratio} of $G$ is defined as $$\rho(G)\defeq \max\{|V(F)|/\alpha(F)\,:\, F \textrm{ is a subgraph of } G \}.$$
\begin{theo}[{\cite[Theorem~7]{DKPS2020}}]\label{thm:dkps-hallratio}
    There is a monotone increasing function $K \colon [1, \infty) \to [1, \infty)$ satisfying $K(1) = 1$ and $K(\rho) = (1 + o_{\rho}(1)) \log \rho$ such that the following holds. For every $\rho \ge 1$ and every graph $G$ of maximum degree $\Delta$ in which the neighborhood of every vertex induces a subgraph of Hall ratio at most $\rho$, we have \[\chi(G) \le (K(\rho) + o_{\Delta}(1))\Delta / \log \Delta.\]
\end{theo}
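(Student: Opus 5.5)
The plan is to deduce this from the \emph{local occupancy} framework underlying \cite{DKPS2020}, using the hard-core model. Write $\mu_\lambda$ for the hard-core distribution at fugacity $\lambda$ on an induced subgraph $G'$ of $G$, and $\mathbf I$ for a random independent set drawn from it. The general reduction I would invoke is the following. Suppose there are $a,b>0$ such that for every induced subgraph $G'$ and every $v\in V(G')$,
\[
a\,\Pr[v\in\mathbf I]+b\,\mathbb E\,|N(v)\cap\mathbf I|\ge 1 .
\]
Then $\chi_f(G)\le a+b\Delta$. Moreover, if $\lambda$ is taken somewhat small and the occupancy quantities concentrate, a semi-random (nibble) colouring procedure upgrades this to $\chi(G)\le(1+o_\Delta(1))(a+b\Delta)$. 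I would take this reduction, the analogue of \cite[Theorem~3]{DKPS2020}, as the black box. The theorem then reduces to a purely local statement about a single vertex and its neighbourhood.

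For the local statement, fix $v$ and condition on $\mathbf I\setminus N[v]$. Let $U\subseteq N(v)$ be the set of neighbours of $v$ with no neighbour in $\mathbf I\setminus N[v]$, and put $F=G[U]$. By the spatial Markov property,
\[
\Pr[v\in\mathbf I\mid\cdot]=\frac{\lambda}{\lambda+Z_F(\lambda)},
\qquad
\mathbb E\bigl[|N(v)\cap\mathbf I|\bigm|\cdot\bigr]=\frac{Z_F(\lambda)}{\lambda+Z_F(\lambda)}\cdot\frac{\lambda Z_F'(\lambda)}{Z_F(\lambda)} .
\]
The hypothesis says that every subgraph of $F$ has Hall ratio at most $\rho$. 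So every induced subgraph $F'$ of $F$ has $\alpha(F')\ge|F'|/\rho$. I would use this to prove the key inequality: the expected occupancy $\lambda Z_F'/Z_F$ of the hard-core model on $F$ is at least roughly $\log Z_F(\lambda)/(K(\rho)\cdot\log(\text{something in }\lambda))$. The intended argument is an entropy/convexity comparison. One writes $\log Z_F(\lambda)=\int_0^\lambda \mathbb E_t|\mathbf I_F|\,dt/t$ and bounds $\mathbb E_t|\mathbf I_F|$ from below by $|F|$-proportional quantities, using the Hall-ratio bound recursively on the uncovered parts. The function $K(\rho)$ would be \emph{defined} as the optimal constant in this inequality, which makes it monotone with $K(1)=1$. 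The case $\rho=1$ corresponds to $F$ edgeless, where occupancy is exactly $\lambda|U|/(1+\lambda)$, recovering the triangle-free Shearer-type constant. For large $\rho$, the asymptotics $K(\rho)=(1+o(1))\log\rho$ would come from comparing against $F$ being a disjoint union of cliques of size about $\rho$.

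With the local inequality in hand, I would set $y=\log Z_F(\lambda)$ and note that the two conditional expectations become $\lambda e^{-y}$ (up to $1+o(1)$) and at least $y/(K(\rho)\log(1+\lambda)^{-1}\cdots)$. I would then minimise $a\lambda e^{-y}+b\,y/K$ over $y\ge0$. Choosing $\lambda$ a slowly decaying function of $\Delta$ (e.g.\ $1/\log\Delta$), $a=\lambda^{-1}\Delta^{1-o(1)}$-scale and $b=(K(\rho)+o(1))/\log\Delta$ gives $a+b\Delta=(K(\rho)+o(1))\Delta/\log\Delta$. Averaging over the conditioning preserves the inequality, since it holds pointwise. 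The concentration needed for the chromatic (rather than fractional) version follows from bounded-difference inequalities because $\lambda\to0$.

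I expect the main obstacle to be the local occupancy inequality for Hall-ratio-bounded $F$ with the sharp constant $K(\rho)\sim\log\rho$. The difficulty is that bounding $\alpha(F')$ for subgraphs does not directly control the hard-core occupancy at all fugacities. My first attempt would be an induction on $|F|$ via the standard identity $Z_F=Z_{F-u}+\lambda Z_{F-N[u]}$, combined with a greedy peeling of independent sets of relative size $1/\rho$.
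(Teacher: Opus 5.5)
The paper does not prove this statement. It is quoted from \cite[Theorem~7]{DKPS2020} and used only as the maximum-degree input for Corollary~\ref{cor:Hallratio}. Your outline follows the local-occupancy strategy of that reference, and its formal parts are fine: the fractional reduction (the local condition for every induced subgraph and vertex gives $\chi_f(G)\le a+b\Delta$), the spatial Markov computation, and the optimisation over $y=\log Z_F(\lambda)$ with $\lambda\approx 1/\log\Delta$. But there is a genuine gap exactly where the content of the theorem lies. Everything reduces to a local inequality of the form $\mathbb E_\lambda|\mathbf I_F|\ge(1-o(1))\log Z_F(\lambda)/K(\rho)$ as $\lambda\to0$. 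It must hold uniformly over all graphs $F$ on at most $\Delta$ vertices whose induced subgraphs $F'$ satisfy $\alpha(F')\ge|F'|/\rho$. You do not prove it. You describe an ``intended'' entropy/convexity argument and a ``first attempt'' by induction, and then \emph{define} $K(\rho)$ to be the optimal constant. That makes monotonicity and $K(1)=1$ essentially free. It leaves open both the finiteness of this constant and the asymptotics $K(\rho)=(1+o(1))\log\rho$, which are the only non-formal part of the statement.

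Your heuristic for where $\log\rho$ comes from also points at the wrong configuration, and this matters for how the lemma would have to be proved. At $\lambda\to0$, a disjoint union of copies of $K_\rho$ has $\log Z_F/\mathbb E|\mathbf I_F|=(1+\rho\lambda)\log(1+\rho\lambda)/(\rho\lambda)\to1$, so it costs nothing. The factor $\log\rho$ is forced by blow-ups instead. Let $F$ be the complete $t$-partite graph with $t=\lfloor\rho\rfloor$ parts of size $s$, where $(1+\lambda)^s=e^x$. Its Hall ratio is at most $t$, $Z_F=1+t(e^x-1)$, and $\mathbb E|\mathbf I_F|=(1+o_\lambda(1))\,txe^x/Z_F$. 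Taking $x=1/\log t$ gives $\log Z_F/\mathbb E|\mathbf I_F|=(1-o(1))\log\rho$ even though $\lambda\to0$. Disjoint unions of about $\log\Delta/\log\rho$ copies realise this at $\log Z_F\approx\log\Delta$ with $|F|\ll\Delta$. So a proof of the local inequality has to control parts whose effective fugacity $s\lambda$ is of order one rather than of order $\lambda$. Nothing in your sketch (greedy peeling of independent sets of relative size $1/\rho$, or the recursion $Z_F=Z_{F-u}+\lambda Z_{F-N[u]}$) indicates how the sharp constant $1$ in front of $\log\rho$ would come out. Finally, you assert the passage from $\chi_f$ to $\chi$ under unspecified hypotheses (``$\lambda$ somewhat small'', ``the occupancy quantities concentrate''). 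If you rely on the colouring theorem of \cite{DKPS2020}, you must state its actual hypotheses and verify them for your choice of $\lambda$, $a$, $b$, rather than appeal to bounded differences.
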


\begin{theo}[{\cite[Theorem~8]{DKPS2020}}]\label{thm:dkps-clique}
    For any graph $G$ of clique number $\omega$ and maximum degree $\Delta$, we have
    \[
    \chi(G) \leq (1 + o(1)) \min \left\{ (\omega - 2) \frac{\Delta \log \log \Delta}{\log \Delta}, 5\Delta \sqrt{\frac{\log(\omega - 1)}{\log \Delta}} \right\}.
    \]
\end{theo}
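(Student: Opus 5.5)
The plan is to work inside the local occupancy (hard-core model) framework underlying Theorems~\ref{thm:dkps-cycle} and~\ref{thm:dkps-hallratio}, rather than to deduce the statement directly from Theorem~\ref{thm:dkps-hallratio}. That direct deduction is too weak. The neighbourhood of a vertex of $G$ induces a graph with clique number at most $\omega-1$ on at most $\Delta$ vertices. The Erdős--Szekeres form of Ramsey's theorem then gives independence ratio at least about $n^{1/(\omega-1)}/n$ on every $n$-vertex subgraph. So the Hall ratio of a neighbourhood is at most $\rho\le \Delta^{1-1/(\omega-1)}$, and $K(\rho)\approx(1-\tfrac1{\omega-1})\log\Delta$, which only gives $\chi=O(\Delta)$.

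\textbf{Local occupancy condition.} I will use the following criterion from the framework of~\cite{DKPS2020}. Suppose there are a fugacity $\lambda$ and weights $\beta,\gamma>0$ such that, for the hard-core measure $I$ on any induced subgraph and every vertex $v$,
\[
\beta\,\Pr[v\in I]+\gamma\,\mathbb{E}|N(v)\cap I|\ge 1 .
\]
Then $\chi_f(G)\le \beta+\gamma\Delta$. The greedy-type extension of that framework upgrades this to a bound on $\chi$ with a $(1+o(1))$ loss.

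Conditioning on $I\setminus N(v)$ reduces the condition to a statement about the hard-core model on the set $U$ of uncovered neighbours of $v$. The graph $G[U]$ has clique number at most $\omega-1$. For it we need two facts:
\[
\Pr[v\in I]\ge (1+\lambda)^{-|U|}\cdot\tfrac{\lambda}{1+\lambda}
\quad\text{and}\quad
\mathbb{E}|N(v)\cap I|\ge \tfrac{\lambda}{1+\lambda}\cdot\frac{Z'_U}{Z_U}.
\]
Here $Z_U$ is the partition function of $G[U]$, and $\lambda Z'_U/Z_U$ is the expected size of a hard-core independent set in $G[U]$. The whole proof therefore reduces to a lower bound on $\log Z_U$, equivalently on the occupancy fraction, in terms of $|U|$ for $K_{\omega}$-free graphs. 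The convexity argument of Davies--Jenssen--Perkins--Roberts turns this into the required trade-off between the two terms.

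\textbf{First bound.} For the term $(\omega-2)\Delta\log\log\Delta/\log\Delta$ I will induct on $\omega$. The base case $\omega=2$ is triangle-free, which is Theorem~\ref{thm:dkps-cycle} with $k=3$ (or the hard-core proof of Molloy's theorem). In the inductive step, the key input is that $G[U]$ is $K_{\omega}$-free, so by induction (applied at a lower clique number) it has independence ratio at least about $\log|U|/((\omega-3)|U|\log\log|U|)$. This lower-bounds $\log Z_U\ge \alpha(G[U])\log(1+\lambda)$. Choosing $\lambda\approx 1/\log\Delta$ and optimizing $\beta,\gamma$, I expect one extra factor $\log\log\Delta$ to be absorbed, not compounded, at each step. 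This is where the linear dependence $(\omega-2)$ comes from.

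\textbf{Second bound.} For $5\Delta\sqrt{\log(\omega-1)/\log\Delta}$ I will instead bound $\log Z_U$ by counting many independent sets rather than one large one. Every $K_\omega$-free graph on $n$ vertices contains $\exp(\Omega(\log^2 n/\log(\omega-1)))$ independent sets. This follows from iterating Ramsey bounds: pick a vertex, recurse into its neighbourhood or its non-neighbourhood, and use that the clique number drops only along neighbourhoods. Plugging this into the local occupancy inequality with $\lambda$ of constant order and balancing $\beta$ against $\gamma\Delta$ should give the square-root expression, with the constant $5$ coming out of the optimization.

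\textbf{Main obstacle.} I expect the hardest step to be the inductive step for the first bound. One has to check that the $o_\Delta(1)$ errors and the $\log\log$ factor do not accumulate badly across the induction in $\omega$. One also has to verify that the local occupancy condition holds uniformly over all induced subgraphs, as the colouring conversion requires. If the naive induction loses a constant at each level, I would fall back on bounding $\log Z_U$ directly by the sum over levels of the Ramsey-type recursion.
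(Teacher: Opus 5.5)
The paper gives no proof of this statement. It quotes it from \cite[Theorem~8]{DKPS2020} and uses it as a black box; only the first term is needed, in Corollary~\ref{cor:cliquefree}. Your sketch therefore has to stand on its own. Your observation that the Hall-ratio theorem used as a black box only gives $O(\Delta)$ is correct, but the sketch breaks at the reduction step.

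Conditioning on $J=I\setminus N[v]$ gives exactly $\Pr[v\in I\mid J]=\lambda/(\lambda+Z_U(\lambda))\ge \frac{\lambda}{1+\lambda}\cdot\frac{1}{Z_U(\lambda)}$. You then replace $1/Z_U$ by the much smaller $(1+\lambda)^{-|U|}$. That discards exactly what makes local occupancy work beyond the triangle-free case: when $G[U]$ has few independent sets, $Z_U$ is small and the $\beta$-term alone suffices. After your weakening, the $\beta$-term is negligible once $|U|\gg\lambda^{-1}\log(\beta\lambda)$. You would then need $\gamma\,\lambda Z_U'/Z_U\gtrsim 1$ for every $K_\omega$-free graph of that order, i.e.\ a lower bound on the occupancy in terms of $|U|$ alone. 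At your parameters this is false.

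Concretely, take $\omega=4$, so that $\beta\le\Delta$ and $\gamma\le(2+o(1))\log\log\Delta/\log\Delta$, and take $\lambda=c/\log\Delta$. Since $R(4,t)\ge t^{5/2-o(1)}$, there is a $K_4$-free graph $U$ on $n=\lceil 3c^{-1}\log^2\Delta\rceil$ vertices with $\alpha(U)\le(\log\Delta)^{4/5+o(1)}$. Such a $U$ can be the neighbourhood of a vertex in a graph of clique number $4$ and maximum degree $\Delta$. For this $U$:
\begin{itemize}
\item your $\beta$-term is at most $\Delta\lambda(1+\lambda)^{-n}=\Delta^{-2+o(1)}$;
\item your $\gamma$-term is at most $\gamma\,\alpha(U)\le(\log\Delta)^{-1/5+o(1)}$.
\end{itemize}
So the inequality you reduced to is false. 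Yet the exact $\beta$-term $\beta\lambda/((1+\lambda)Z_U)$ is huge for any $\beta\ge\Delta^{\Omega(1)}$, because $\log Z_U\le\alpha(U)\log(1+\lambda n)=(\log\Delta)^{4/5+o(1)}$. The same Ramsey-graph obstruction defeats your second bound with $\lambda$ of constant order, for each fixed $\omega\ge 8$. There, $K_\omega$-free graphs on $\log^2\Delta$ vertices can have independence number $(\log\Delta)^{4/(\omega+1)+o(1)}\ll\sqrt{\log\Delta}$.

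The inductive input for the first bound is also too weak, and of the wrong kind. Applying the case $\omega-1$ to $G[U]$ bounds $\chi(G[U])$ in terms of $\Delta(G[U])$, which can be $|U|-1$. So it only yields $\alpha(G[U])\gtrsim\log|U|/((\omega-3)\log\log|U|)$, far below the trivial Ramsey bound $|U|^{1/(\omega-1)}$. At $\lambda\approx1/\log\Delta$, the resulting bound $\log Z_U\ge\alpha(G[U])\log(1+\lambda)$ is only $O(1/\log\log\Delta)$ and carries no information.

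More fundamentally, a lower bound on $\log Z_U$ is not what is needed. With the exact $\beta$-term, a large $Z_U$ hurts, and $\log Z_U$ reaches the $\gamma$-term only through convexity in $\log\lambda$, with a logarithmic loss. What must be proved is a trade-off valid for every $K_\omega$-free $F$ that can occur inside a neighbourhood: if $\log Z_F(\lambda)\gtrsim\log(\beta\lambda)$, then $\lambda Z_F'(\lambda)/Z_F(\lambda)\gtrsim 1/\gamma$. This is where counting independent sets in $K_\omega$-free graphs has to enter, with the constants tracked.

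Finally, the induction is ill-posed at its start:
\begin{itemize}
\item the displayed bound is $0$ at $\omega=2$, so triangle-free graphs cannot serve as the base case of this formula;
\item your inductive hypothesis has the factor $\omega-3=0$ in a denominator at $\omega=3$;
\item the claim that $\log\log\Delta$ is ``absorbed, not compounded'' at each step is exactly the content that would need proof.
\end{itemize}
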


Applying Theorem~\ref{thm:main} to these inputs gives the following lower bounds for independence number of corresponding graph classes.

\begin{corl}\label{cor:cyclefree}
    Fix $k\geq 3$. Every $C_k$-free graph $G$ of average degree $d$ satisfies
    $$\alpha(G)\geq (1-o_d(1))\dfrac{\log d}{d}|V(G)|.$$
\end{corl}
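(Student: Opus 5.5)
We apply Theorem~\ref{thm:main} with $r=1$ (graphs are $2$-uniform hypergraphs) and $f(x)=\log x$, suitably modified on a bounded interval so that it is positive on $(0,\infty)$. Let $\G$ be the class of $C_k$-free graphs. This class is hereditary: a subgraph copy of $C_k$ in an induced subgraph $G[S]$ is also a subgraph copy in $G$, so $C_k$-freeness passes to induced subgraphs (indeed to all subgraphs).

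The first step is to check that $f$ is nearly logarithmic via Proposition~\ref{prop:easy-nearly-log}. The function $\log x$ is eventually increasing and smooth. It satisfies $\log x=x^{o(1)}$ and $\liminf\log x=\infty>2$. Moreover,
\[
\frac{x f'(x)}{f(x)}\log f(x)=\frac{\log\log x}{\log x}\to 0 .
\]
The modification on a bounded interval must be made continuously differentiable, for instance by smoothly gluing $\log x$ for $x\ge e^3$ to a positive function on $(0,e^3]$. Since Definition~\ref{def:nearly-log} only involves behaviour for large $x$, the modification is harmless. The remark after Proposition~\ref{prop:easy-nearly-log} already records this case.

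The second step is to derive the maximum-degree hypothesis of Theorem~\ref{thm:main} from Theorem~\ref{thm:dkps-cycle}. For $G\in\G$ with $\Delta(G)=\Delta$, combine $\alpha(G)\ge |V(G)|/\chi(G)$ with $\chi(G)\le (1+o_\Delta(1))\Delta/\log\Delta$. This gives
\[
\alpha(G)\ge (1+o_\Delta(1))^{-1}\frac{\log\Delta}{\Delta}|V(G)|.
\]
Hence for every $\delta>0$ there is $\Delta_0$ such that $\alpha(G)\ge(1-\delta)\frac{f(\Delta)}{\Delta}|V(G)|$ whenever $\Delta\ge\Delta_0$. For $\Delta_0\ge e^3$ the function $f$ agrees with $\log$ there.

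Theorem~\ref{thm:main} then gives, for every $\epsilon>0$, a threshold $d_0$ such that $\alpha(G)\ge(1-\epsilon)\frac{\log d}{d}|V(G)|$ for all $G\in\G$ with $d(G)\ge d_0$ (taking $d_0\ge e^3$), which is the claim. There is no real obstacle. The only points needing care are the smoothing of $\log$ near $0$ and confirming that the $o_\Delta(1)$ in Theorem~\ref{thm:dkps-cycle} is uniform over the class for fixed $k$, which it is as stated.
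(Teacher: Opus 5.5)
Your proposal is correct and matches the paper's proof: the paper also turns Theorem~\ref{thm:dkps-cycle} into a maximum-degree independence bound via $\alpha(G)\ge |V(G)|/\chi(G)$, notes that $C_k$-freeness is hereditary and that $f(x)=\log x$ is nearly logarithmic, and applies Theorem~\ref{thm:main} with $r=1$. Your check of Proposition~\ref{prop:easy-nearly-log} for $\log x$ (namely $\frac{xf'(x)}{f(x)}\log f(x)=\frac{\log\log x}{\log x}\to 0$) and your smoothing near $0$ only fill in details the paper leaves to its remark.
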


\begin{proof}
By \Cref{thm:dkps-cycle}, every $C_k$-free graph of maximum degree $\Delta$ satisfies
\[
\alpha(G)\ge \frac{|V(G)|}{\chi(G)}\ge (1-o(1))\frac{\log\Delta}{\Delta}|V(G)|.
\]
The class of $C_k$-free graphs is hereditary, and $f(x)=\log x$ is nearly logarithmic. Apply \Cref{thm:main} with $r=1$.
\end{proof}

\begin{corl}\label{cor:Hallratio}
    There is a monotone increasing function $K \colon [1, \infty) \to [1, \infty)$ satisfying $K(1) = 1$ and $K(\rho) = (1 + o(1)) \log \rho$ as $\rho \to \infty$ such that the following holds. For every $\rho \ge 1$ and every graph $G$ of average degree $d$ in which the neighborhood of every vertex $u \in V(G)$ induces a subgraph of Hall ratio at most $\rho$,
\[
\alpha(G)\ge \left(\frac1{K(\rho)}-o_d(1)\right)\frac{\log d}{d}|V(G)|.
\]
\end{corl}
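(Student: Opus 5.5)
We follow the template of Corollary~\ref{cor:cyclefree}, now with $\rho$ fixed and a scaled nearly logarithmic function. Fix $\rho\ge1$ and take $K$ to be the function from Theorem~\ref{thm:dkps-hallratio}. Let $\G_\rho$ be the class of graphs in which every vertex neighborhood induces a subgraph of Hall ratio at most $\rho$.

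First we check that $\G_\rho$ is hereditary. Let $G'$ be an induced subgraph of $G\in\G_\rho$ and $u\in V(G')$. Then $N_{G'}(u)=N_G(u)\cap V(G')$, so $G'[N_{G'}(u)]$ is an induced subgraph of $G[N_G(u)]$. The Hall ratio is defined as a maximum over all subgraphs, so it is monotone under taking subgraphs, and hence $\rho(G'[N_{G'}(u)])\le\rho$.

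Next we get the maximum-degree input. Theorem~\ref{thm:dkps-hallratio} together with $\alpha(G)\ge |V(G)|/\chi(G)$ gives, for $G\in\G_\rho$ of maximum degree $\Delta$,
\[
\alpha(G)\ge \frac{\log\Delta}{(K(\rho)+o_\Delta(1))\Delta}|V(G)| = \left(1-o_\Delta(1)\right)\frac{\log \Delta/K(\rho)}{\Delta}|V(G)|,
\]
using $K(\rho)\ge1$. So we apply Theorem~\ref{thm:main} with $r=1$ and $f(x)=\log x/K(\rho)$, modified on a bounded interval to be positive.

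We must check that this $f$ is nearly logarithmic, via Proposition~\ref{prop:easy-nearly-log}. It is eventually increasing and $C^1$, and $f(x)=x^{o(1)}$. Also $\liminf f=\infty>2$, since $K(\rho)$ is a fixed constant. Finally $xf'(x)/f(x)=1/\log x$ and $\log f(x)=\log\log x-\log K(\rho)$, so the product tends to $0$.

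Theorem~\ref{thm:main} then yields
\[
\alpha(G)\ge(1-\epsilon)\frac{\log d}{K(\rho)d}|V(G)| \quad\text{for } d\ge d_0(\epsilon,\rho).
\]
Since $(1-\epsilon)/K(\rho)\ge 1/K(\rho)-\epsilon$, this is the claimed bound, with the $o_d(1)$ term allowed to depend on $\rho$, as in the statement.

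The only delicate point is bookkeeping. The constant $K(\rho)$ must be absorbed into $f$ rather than into the $(1-\delta)$ factor, since Theorem~\ref{thm:main} requires the multiplicative error to tend to $1$. We must also confirm that the $o_\Delta(1)$ term in Theorem~\ref{thm:dkps-hallratio} is uniform once $\rho$ is fixed, which holds because $\rho$ is fixed before $\Delta\to\infty$. No other obstacle is expected.
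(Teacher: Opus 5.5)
Your proposal is correct and follows essentially the same route as the paper: fix $\rho$, use $\alpha(G)\ge |V(G)|/\chi(G)$ with Theorem~\ref{thm:dkps-hallratio} as the maximum-degree input, note heredity via induced neighborhoods, and apply Theorem~\ref{thm:main} with $r=1$ and $f(x)=\log x/K(\rho)$. You add a few details the paper leaves implicit, namely the explicit check of the hypotheses of Proposition~\ref{prop:easy-nearly-log} and the final step $(1-\epsilon)/K(\rho)\ge 1/K(\rho)-\epsilon$.
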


\begin{proof}
By \Cref{thm:dkps-hallratio}, every graph in this class with maximum degree $\Delta$ satisfies
\[
\alpha(G)\ge \frac{|V(G)|}{\chi(G)}\ge \left(\frac1{K(\rho)}-o_\Delta(1)\right)\frac{\log\Delta}{\Delta}|V(G)|.
\]
The class is hereditary, since the neighborhood of a vertex in an induced subgraph is an induced subgraph of its neighborhood in the original graph. The function $f(x)=\log x/K(\rho)$ is nearly logarithmic. Apply \Cref{thm:main} with $r=1$.
\end{proof}

\begin{corl}\label{cor:cliquefree}
     Fix $k\geq 3$. Every $K_{k+1}$-free $G$ of average degree $d$ satisfies
    \[
    \alpha(G) \geq (1 -o_d(1)) \max \left\{  \frac{\log d}{(k - 2)d \log \log d},  \frac{1}{5d}\sqrt{\frac{\log d}{\log(k - 1)}} \right\}.
    \]
\end{corl}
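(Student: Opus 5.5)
We obtain the corollary by applying Theorem~\ref{thm:main} with $r=1$ to the class of $K_{k+1}$-free graphs, once for each of the two terms in the maximum. This class is hereditary: an induced subgraph of a $K_{k+1}$-free graph is again $K_{k+1}$-free.

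\textbf{Input bound.} Any $K_{k+1}$-free graph $G$ has clique number $\omega\le k$. Theorem~\ref{thm:dkps-clique} is stated for a fixed $\omega$, so we first check that it applies uniformly. Both expressions inside its minimum are increasing in $\omega$. Hence we may use the bound with $\omega$ replaced by $k$, and for each $\omega\le k$ the $o(1)$ term is uniform because there are only finitely many values of $\omega$. Taking the worst case over these finitely many $\omega$ keeps the error term as $o_\Delta(1)$. Combining this with $\alpha(G)\ge |V(G)|/\chi(G)$, and using $1/\min\{a,b\}=\max\{1/a,1/b\}$, we get
\[
\alpha(G)\ge (1-o_\Delta(1))\max\left\{\frac{\log\Delta}{(k-2)\Delta\log\log\Delta},\ \frac{1}{5\Delta}\sqrt{\frac{\log\Delta}{\log(k-1)}}\right\}|V(G)|.
\]
In particular, each of the two terms separately is a valid maximum-degree lower bound on the class.

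\textbf{Checking the functions.} The two bounds have the form of Theorem~\ref{thm:main} with
\[
f_1(x)=\frac{\log x}{(k-2)\log\log x},\qquad f_2(x)=\frac15\sqrt{\frac{\log x}{\log(k-1)}},
\]
each modified on a bounded interval so that it is positive and $C^1$ on $(0,\infty)$. We verify that $f_1$ and $f_2$ are nearly logarithmic via Proposition~\ref{prop:easy-nearly-log}:
\begin{itemize}
\item Both are eventually increasing.
\item Both satisfy $f(x)=x^{o(1)}$ and tend to infinity, so $\liminf f>2$.
\item For $f_2$ we have $xf_2'(x)/f_2(x)=1/(2\log x)$, so $\frac{xf_2'}{f_2}\log f_2=O(\log\log x/\log x)\to0$.
\item For $f_1$ we have $xf_1'/f_1=(1-1/\log\log x)/\log x$, which gives $O(\log\log x/\log x)\to0$ in the same way.
\end{itemize}
Multiplicative constants such as $1/(k-2)$ and $1/5$ only shift $\log f$ by a constant, so they do not affect the limit.

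\textbf{Conclusion.} Theorem~\ref{thm:main} applied to $f_1$ and to $f_2$ gives two average-degree bounds with $d$ in place of $\Delta$. Taking the larger of the two bounds gives the stated maximum, with a common $o_d(1)$ error term.

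The only delicate point is the uniformity in $\omega$ in the first step. It is handled by the monotonicity of both expressions in $\omega$ and the finiteness of the range $\omega\le k$.
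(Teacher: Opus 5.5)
Your proposal is correct and follows the paper's route: Theorem~\ref{thm:dkps-clique} together with $\alpha(G)\ge |V(G)|/\chi(G)$ gives the maximum-degree input, and Theorem~\ref{thm:main} with $r=1$ is applied to the hereditary class of $K_{k+1}$-free graphs. The only difference is that the paper observes that for fixed $k$ the first term eventually dominates the second, so it applies Theorem~\ref{thm:main} once with $f(x)=\log x/((k-2)\log\log x)$, whereas your two applications followed by taking the maximum are equally valid; your uniformity check over $\omega\le k$ makes explicit a step the paper leaves implicit, with one caveat: the formula degenerates at $\omega=2$, so triangle-free graphs must be covered separately, either by applying the theorem with $\omega=k$ to all graphs of clique number at most $k$, as the paper implicitly does, or by Theorem~\ref{thm:dkps-cycle} with $k=3$.
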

\begin{proof}
If $G$ is $K_{k+1}$-free, then $\omega(G)\le k$. Hence \Cref{thm:dkps-clique} yields
\[
\alpha(G)\ge \frac{|V(G)|}{\chi(G)}\ge (1-o(1))|V(G)|\max\left\{\frac{\log\Delta}{(k-2)\Delta\log\log\Delta},\,\frac{1}{5\Delta}\sqrt{\frac{\log\Delta}{\log(k-1)}}\right\}.
\]
For fixed $k$, the first term eventually dominates the second. Thus the maximum-degree input holds with
\[
f(x)=\frac{\log x}{(k-2)\log\log x},
\]
which is nearly logarithmic. The class of $K_{k+1}$-free graphs is hereditary. Apply \Cref{thm:main} with $r=1$.
\end{proof}

\subsection{Locally colorable graphs and locally sparse hypergraphs}

We next use fractional-coloring bounds of Dhawan~\cite{Dhawan2026entropy}. These bounds apply to degenerate graphs and hypergraphs; for our purposes, a maximum-degree bound is obtained by noting that maximum degree at most $\Delta$ implies $\Delta$-degeneracy, and that
\[
\alpha(H)\ge \frac{|V(H)|}{\chi_f(H)}.
\]
A graph is \emphd{locally $q$-colorable} if the subgraph induced by the neighborhood of every vertex is $q$-colorable. For hypergraphs, we use the term \emphd{locally sparse} to mean girth at least $4$, that is, having no $2$- or $3$-cycles.

\begin{theo}[{\cite[Theorem~1.2]{Dhawan2026entropy}}]\label{thm:dhawan-graph}
    For all $\varepsilon > 0$ and $q \in \mathbb{N}$ there exists $d_0 \in \mathbb{N}$ such that the following holds for $d \geq d_0$ and $n \in \mathbb{N}$. Let $G$ be an $n$-vertex $d$-degenerate locally $q$-colorable graph. Then
    \[
    \chi_f(G) \leq (8 + \varepsilon) \frac{d \log(2q)}{\log d}.
    \]
\end{theo}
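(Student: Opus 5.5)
Since this is a cited result, I only outline how one could reprove it. The plan combines LP duality for fractional colorings with a local-occupancy argument for the hard-core model. By duality, $\chi_f(G)\le k$ holds iff for every weight function $w\colon V(G)\to[0,\infty)$ there is an independent set $I$ with $w(I)\ge w(V(G))/k$. So I fix $w$, set $k=(8+\varepsilon)d\log(2q)/\log d$, and look for a random independent set $\mathbf I$ with $\mathbb E\,w(\mathbf I)\ge w(V(G))/k$.

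The natural random set is the hard-core model on $G$ (or on a suitable weighted modification) at fugacity $\lambda\approx d^{-c}$. The first step is a local occupancy lemma, in the style of Davies–Kang–Pirot–Sereni. For every vertex $v$, condition on $\mathbf I\setminus N[v]$ and let $U$ be the set of uncovered neighbours. Local $q$-colorability of $G[N(v)]$ forces the hard-core partition function on $G[U]$ to be at least roughly $(1+\lambda)^{|U|/q}$-large, so that $\log Z_U\gtrsim |U|\log(1+\lambda)/q$-type bounds hold. This yields an inequality
\[
a\,\mathbb P(v\in\mathbf I)+b\,\mathbb E|N(v)\cap\mathbf I|\ge 1
\]
with $a\approx \lambda^{-1}$-type and $b\approx \log(2q)/\log d$-type constants. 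These constants are optimised (via the convexity computation for $(1+\lambda)^{u}$ versus $u$) to produce the factor $8+\varepsilon$.

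The second step replaces maximum degree by degeneracy. Fix a $d$-degenerate ordering, so every vertex has at most $d$ neighbours preceding it. Use only the forward/backward split when summing: multiply the local inequality at $v$ by $w(v)$ and sum. The term $\sum_v w(v)\mathbb E|N(v)\cap \mathbf I|$ must be bounded by $d\cdot\mathbb E\,w(\mathbf I)$, but this is false in general because high-degree vertices $u\in\mathbf I$ are counted once for every later neighbour. To handle this I would reweight and run the hard-core model with vertex-dependent fugacities along the ordering, or equivalently use an entropy-compression/sequential sampling that only charges each $v$ for its at most $d$ back-neighbours. Concretely, I would try to prove a modified local inequality
\[
a\,\mathbb P(v\in\mathbf I)+b\,\mathbb E|N^-(v)\cap\mathbf I|\ge 1,
\]
in which only the back-neighbourhood $N^-(v)$ appears. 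Summing this with weights and swapping sums then gives $\mathbb E\,w(\mathbf I)(a+bd)\ge w(V)$, which is the required bound after choosing $\lambda$.

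I expect that second step to be the main obstacle. The hard-core model does not see the ordering, so the back-neighbour version of the occupancy inequality has to come from a different, sequential random construction, for which I would first try an entropy argument. Local $q$-colorability must still be exploited on $N^-(v)$, which is again locally $q$-colorable because the class is hereditary. After that, only the parameter optimisation remains to reach $8+\varepsilon$.
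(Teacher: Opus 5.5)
The paper contains no proof of this statement. It is quoted from Dhawan and used only as a black box. Taken on its own terms, your outline has two genuine gaps.

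\textbf{First, the summation step is wrong.} Swapping sums gives
\[
\sum_{v} w(v)\,\mathbb E|N^-(v)\cap\mathbf I|=\mathbb E\sum_{u\in\mathbf I} w\bigl(N^+(u)\bigr),\qquad N^+(u)=\{v: u\in N^-(v)\}.
\]
So each $u\in\mathbf I$ is charged for all of its \emph{later} neighbours, of which there can be unboundedly many in a $d$-degenerate order. It is also charged with their weights rather than $w(u)$. Restricting to $N^-(v)$ therefore does not remove the over-counting you diagnosed; it only reverses its direction.

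A concrete counterexample: order $K_{1,N}$ centre first (so $d=1$) and let $\mathbf I=\{\text{centre}\}$ deterministically. Then $\mathbb P(v\in\mathbf I)+\mathbb E|N^-(v)\cap\mathbf I|\ge 1$ holds at every vertex, yet $\mathbb E|\mathbf I|=1<(N+1)/2$. So a local inequality for one distribution on $G$, summed against $w$, gives nothing under degeneracy.

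What does work is the greedy fractional-colouring argument of Davies--Kang--Pirot--Sereni, but it needs the inequality hereditarily. Suppose every $G[S]$ carries a distribution $\mathbf I_S$ with $a\,\mathbb P(v\in\mathbf I_S)+b\,\mathbb E|N^-(v)\cap\mathbf I_S|\ge 1$ for all $v\in S$.
\begin{itemize}
\item Let $S_t$ be the set of vertices with coverage $c_t(v)<1$, and raise $c_t(v)$ at rate $\mathbb P(v\in\mathbf I_{S_t})$.
\item The potential $a\,c_t(v)+b\sum_{u\in N^-(v)}c_t(u)$ grows at rate at least $1$ while $v\in S_t$.
\item It never exceeds $a+bd$, so every vertex is covered by time $a+bd$, i.e.\ $\chi_f(G)\le a+bd$.
\end{itemize}

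\textbf{Second, and decisively, you never construct such distributions.} The step you defer to ``an entropy argument'' is the whole content of the theorem. The hard-core model cannot supply it: in the star above, the centre has no back-neighbours, while $\mathbb P(\text{centre}\in\mathbf I)=\lambda/(\lambda+(1+\lambda)^N)$ is exponentially small in $N$.

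Already for $q=1$ (triangle-free graphs), the statement gives $\chi_f=O(d/\log d)$ for $d$-degenerate graphs. That is a conjecture of Harris, and it lies beyond the maximum-degree local-occupancy method.

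Your remark that optimising $a,b$ in the first step yields the constant $8+\varepsilon$ is also unsupported. As it stands, the proposal locates the difficulty but does not overcome it.
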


\begin{theo}[{\cite[Theorem~1.3]{Dhawan2026entropy}}]\label{thm:dhawan-hyper}
    For all $\varepsilon > 0$ and $r \in \N$ there exists $d_0 \in \mathbb{N}$ such that the following holds for $d \geq d_0$ and $n \in \mathbb{N}$. Let $H$ be an $n$-vertex $(r+1)$-uniform $d$-degenerate locally sparse hypergraph. Then
    \[
    \chi_f(H) \leq (1 + \varepsilon) \left( \frac{r+1}{r} \right) \left(  \frac{(r+1)r^2 d}{\log d} \right)^{\frac{1}{r}}.
    \]
\end{theo}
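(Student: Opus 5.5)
This result is quoted from \cite{Dhawan2026entropy} and we use it as a black box. If I had to prove it, my route would go through LP duality for the fractional chromatic number. Set
\[
k\defeq(1+\varepsilon)\Bigl(\frac{r+1}{r}\Bigr)\Bigl(\frac{(r+1)r^2d}{\log d}\Bigr)^{1/r}.
\]
By duality, $\chi_f(H)\le k$ holds if and only if there is a probability distribution on independent sets $I$ of $H$ with $\Pr[v\in I]\ge 1/k$ for every vertex $v$. Equivalently, for every weighting $w\colon V(H)\to[0,\infty)$ there must be an independent set of weight at least $w(V(H))/k$. I would aim for the distributional form directly, because it only asks for a bound on each vertex separately.

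\textbf{Ordering and sequential sampling.} Since $H$ is $d$-degenerate, I would fix an ordering $v_1,\dots,v_n$ in which each vertex lies in at most $d$ edges whose other vertices all come earlier. I would build the random independent set $I$ sequentially along this ordering. At step $i$, consider the edges $e\ni v_i$ whose other $r$ vertices are earlier. An edge is \emph{threatened} if all $r$ of those vertices are already in $I$. If no edge is threatened, $v_i$ joins $I$ with a carefully tuned probability $p_i$; otherwise it is excluded.

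The key computation is to lower bound $\Pr[v_i\in I]$. The girth-$4$ (locally sparse) hypothesis makes the back-edges at $v_i$ pairwise intersect only in $v_i$, and also excludes short cycles that would correlate them. This lets me argue, via a local-occupancy or entropy bound in the spirit of Shearer's argument, that the number $X_i$ of threatened edges has controlled moments. The process should be calibrated so that each earlier vertex is in $I$ with probability about $x\approx\bigl(r\log d/((r+1)d)\bigr)^{1/r}$ up to constants. Then $\mathbb E[X_i]\lesssim d x^r$, and the probability that $X_i=0$ is at least about $\exp(-\mathbb E X_i)$ by a Harris/FKG-type inequality or by Janson's inequality, using near-independence from local sparsity. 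Optimising $x$ against the loss $\exp(-dx^r)$ should produce exactly the constant $\frac{r+1}{r}\bigl((r+1)r^2\bigr)^{1/r}$. The factor $\frac{r+1}{r}$ is the usual cost of the optimisation $\max_x x\,e^{-dx^r}$.

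\textbf{Main obstacle.} Degeneracy controls only \emph{backward} degrees. A vertex may have huge forward degree, so the standard local-occupancy machinery, which needs all neighbourhoods to be small, does not apply. I would handle this by conditioning only on the past: the event $v_i\in I$ depends only on earlier vertices, through at most $d$ back-edges. The concentration therefore only involves back-neighbourhoods. The price is an entropy or compression argument, which is the method in the cited paper's title. It must show that the marginal probabilities of earlier vertices never drift too high even though they are influenced by many later constraints. Making this self-consistent, so that each marginal is $\approx x$ while every vertex keeps probability $\ge 1/k$, is the step I expect to be hardest. The first thing I would try is an inductive invariant on the marginals along the ordering.
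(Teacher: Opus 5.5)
Your use of the statement as a black box matches the paper exactly. The paper gives no proof: it imports Theorem~1.3 of \cite{Dhawan2026entropy} and applies it only through the remark that maximum degree $\Delta$ implies $\Delta$-degeneracy. That part of your proposal is fine. The sketch you append, however, would not prove the statement.

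The decisive step is $\Pr[X_i=0]\gtrsim e^{-\mathbb E X_i}$, and nothing you invoke gives it. (The companion estimate $\mathbb E X_i\lesssim dx^r$ already needs, for $r\ge 2$, the $r$ earlier vertices of a back-edge to be nearly independent.)

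\begin{itemize}
\item The law of $I$ produced by your sequential process is not a product measure. It is not positively associated either, since an edge is never contained in $I$. So Harris's inequality does not apply.
\item Janson's inequality bounds $\Pr[X=0]$ from above, not from below.
\item Girth at least $4$ excludes only $2$- and $3$-cycles. The vertices of the back-edges at $v_i$ can therefore still be correlated through longer paths passing through early vertices of unbounded forward degree.
\end{itemize}

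Controlling exactly these correlations is what you defer to an unspecified ``inductive invariant on the marginals''. An invariant on single-vertex marginals cannot by itself bound the joint event $X_i=0$. This is where all the difficulty of the cited theorem lies: for $r=1$ it already gives $\chi_f=O(d/\log d)$ for triangle-free $d$-degenerate graphs, which is a conjecture of Harris.

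Your derivation of the constant is also wrong, in two ways.

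\begin{itemize}
\item The optimisation $\max_x xe^{-dx^r}$ is attained at $x=(rd)^{-1/r}$ with value $(erd)^{-1/r}$. This is the alteration bound and contains no $\log d$ at all.
\item In your scheme the logarithm can only come from the self-consistency constraint $x=\Pr[v_i\in I]\le\Pr[X_i=0]$, which holds because $p_i\le 1$. With your calibration $dx^r=\frac{r}{r+1}\log d$, your bound gives only $\Pr[X_i=0]\gtrsim d^{-r/(r+1)}$. For $r\ge 2$ this is smaller than $x\asymp(\log d/d)^{1/r}$, because $r/(r+1)>1/r$. So the scheme cannot guarantee an admissible $p_i\le 1$.
\end{itemize}

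Even granting the heuristics, the sketch does not produce the factor $\frac{r+1}{r}\bigl((r+1)r^2\bigr)^{1/r}$.
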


Applying Theorem~\ref{thm:main} to the two preceding fractional-coloring bounds gives the following lower bound for independence number of locally $q$-colorable graphs and locally sparse uniform hypergraphs.

\begin{corl}\label{cor:locally-colorable}
    Fix $q\in\N$. Every locally $q$-colorable graph $G$ of average degree $d$ satisfies
    $$\alpha(G)\geq (1-o_d(1))\dfrac{\log d}{8d\log (2q)}|V(G)|.$$
\end{corl}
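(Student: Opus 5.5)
This follows the pattern of the preceding corollaries. We take Theorem~\ref{thm:dhawan-graph} as the maximum-degree input, check that the class and the function $f$ satisfy the hypotheses of Theorem~\ref{thm:main} with $r=1$, and then apply it.

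First, the maximum-degree bound. Let $G$ be locally $q$-colorable with maximum degree $\Delta$. Every subgraph of $G$ has a vertex of degree at most $\Delta$, so $G$ is $\Delta$-degenerate. Fix $\delta>0$ and choose $\varepsilon>0$ with $\frac{1}{8+\varepsilon}\ge \frac{1-\delta}{8}$. Theorem~\ref{thm:dhawan-graph} then gives $d_0$ such that, whenever $\Delta\ge d_0$,
\[
\chi_f(G)\le (8+\varepsilon)\frac{\Delta\log(2q)}{\log\Delta}.
\]
Using $\alpha(G)\ge |V(G)|/\chi_f(G)$, this yields
\[
\alpha(G)\ge (1-\delta)\frac{\log\Delta}{8\Delta\log(2q)}|V(G)|
\]
for all $\Delta\ge\Delta_0:=d_0$. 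This is precisely the hypothesis of Theorem~\ref{thm:main} with $f(x)=\frac{\log x}{8\log(2q)}$, after modifying $f$ on a bounded interval so that it is positive.

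Next, the two structural checks.

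\emph{Hereditary.} The class of locally $q$-colorable graphs is hereditary. If $G'$ is an induced subgraph of $G$ and $v\in V(G')$, then $G'[N_{G'}(v)]$ is an induced subgraph of $G[N_G(v)]$, so it inherits a proper $q$-coloring.

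\emph{Nearly logarithmic.} We check that $f$ satisfies Proposition~\ref{prop:easy-nearly-log}.
\begin{itemize}
\item $f$ is eventually increasing and $C^1$.
\item $f(x)=x^{o(1)}$, and $f(x)\to\infty$, so $\liminf f>2$ holds regardless of the constant $8\log(2q)$.
\item $\frac{xf'(x)}{f(x)}\log f(x)=\frac{\log\log x-\log(8\log 2q)}{\log x}\to 0$.
\end{itemize}
So $f$ is nearly logarithmic.

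Theorem~\ref{thm:main} now gives, for every $\epsilon>0$, a threshold $d_0$ such that every locally $q$-colorable $G$ with $d(G)\ge d_0$ satisfies
\[
\alpha(G)\ge (1-\epsilon)\frac{\log d}{8d\log(2q)}|V(G)|.
\]
This is the claimed bound.

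The only point that needs care is the order of quantifiers in the first step: Theorem~\ref{thm:dhawan-graph} is stated for fixed $\varepsilon$ and $q$, so $\varepsilon$ must be chosen as a function of $\delta$ before extracting $d_0$. Everything else is a routine check.
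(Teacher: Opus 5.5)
Your proposal is correct and follows essentially the same argument as the paper. You use $\Delta$-degeneracy to turn Theorem~\ref{thm:dhawan-graph} into a maximum-degree bound via $\alpha\ge |V|/\chi_f$, check that the class is hereditary and that $f(x)=\log x/(8\log(2q))$ is nearly logarithmic, and then apply Theorem~\ref{thm:main} with $r=1$; your explicit choice of $\varepsilon$ in terms of $\delta$ and your computation of the differential condition in Proposition~\ref{prop:easy-nearly-log} just fill in details the paper leaves implicit.
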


\begin{proof}
A graph of maximum degree $\Delta$ is $\Delta$-degenerate. Hence \Cref{thm:dhawan-graph} implies that every locally $q$-colorable graph of maximum degree $\Delta$ satisfies
\[
\alpha(G)\ge \frac{|V(G)|}{\chi_f(G)}\ge \left(\frac{1}{8\log(2q)}-o(1)\right)\frac{\log\Delta}{\Delta}|V(G)|.
\]
The function $f(x)=\log x/(8\log(2q))$ is nearly logarithmic, and the class of locally $q$-colorable graphs is hereditary. Apply \Cref{thm:main} with $r=1$.
\end{proof}

\begin{corl}\label{cor:locally-sparse}
    Fix $r\ge 1$. Every locally sparse $(r+1)$-uniform hypergraph $H$ of average degree $d$ satisfies
    $$\alpha(H)\geq (1-o_d(1)) \dfrac{r^{\frac{r-2}{r}}}{(r+1)^{\frac{r+1}{r}} } \left(\dfrac{\log d}{d}\right)^{\frac{1}{r}}|V(H)|.$$
\end{corl}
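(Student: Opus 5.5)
The plan is to proceed exactly as in the proofs of Corollaries~\ref{cor:cyclefree}--\ref{cor:locally-colorable}. First I derive a maximum-degree lower bound on $\alpha$ from \Cref{thm:dhawan-hyper}. Then I check that the resulting function $f$ is nearly logarithmic and that the class is hereditary. Finally I apply \Cref{thm:main}.

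\textbf{Step 1: the maximum-degree input.} Let $H$ be a locally sparse $(r+1)$-uniform hypergraph with maximum degree $\Delta$. Then $H$ is $\Delta$-degenerate. Fix $\varepsilon>0$ and take $\Delta\ge d_0(\varepsilon,r)$. By \Cref{thm:dhawan-hyper} and $\alpha(H)\ge |V(H)|/\chi_f(H)$,
\[
\alpha(H)\ge \frac{1}{1+\varepsilon}\cdot\frac{r}{r+1}\left(\frac{\log\Delta}{(r+1)r^2\Delta}\right)^{1/r}|V(H)|
=\frac{1}{1+\varepsilon}\cdot\frac{r^{\frac{r-2}{r}}}{(r+1)^{\frac{r+1}{r}}}\cdot\frac{(\log\Delta)^{1/r}}{\Delta^{1/r}}|V(H)|.
\]
The constant is obtained from $r\cdot r^{-2/r}=r^{(r-2)/r}$ and $(r+1)^{-1}(r+1)^{-1/r}=(r+1)^{-(r+1)/r}$. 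Set $c_r\defeq r^{(r-2)/r}(r+1)^{-(r+1)/r}$ and $f(x)\defeq c_r(\log x)^{1/r}$. Since $\varepsilon$ is arbitrary, the displayed bound is exactly the hypothesis of \Cref{thm:main} with this $f$: for every $\delta>0$ there is $\Delta_0$ such that $\alpha(H)\ge(1-\delta)f(\Delta(H))\Delta(H)^{-1/r}|V(H)|$ whenever $\Delta(H)\ge\Delta_0$.

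\textbf{Step 2: $f$ is nearly logarithmic.} I redefine $f$ on a bounded interval so that it is positive and $C^1$ on $(0,\infty)$; this does not affect any asymptotic statement. I then verify the hypotheses of \Cref{prop:easy-nearly-log}:
\begin{itemize}
\item $f$ is eventually increasing.
\item $f(x)=x^{o(1)}$, since $\log f(x)=O(\log\log x)=o(\log x)$.
\item $\liminf_{x\to\infty} f(x)=\infty>2$.
\item Since $xf'(x)/f(x)=1/(r\log x)$, we get
\[
\frac{xf'(x)}{f(x)}\log f(x)=\frac{\log c_r+\frac1r\log\log x}{r\log x}\to 0.
\]
\end{itemize}
This is also covered by the Remark following \Cref{prop:easy-nearly-log}, since $c_r(\log x)^{\beta}$ with $\beta=1/r$ is of the listed form up to a constant factor.

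\textbf{Step 3: heredity and conclusion.} Let $H'$ be an induced subhypergraph of a locally sparse hypergraph $H$. Every $2$- or $3$-cycle in $H'$ is one in $H$, so $H'$ also has girth at least $4$. Hence the class of locally sparse $(r+1)$-uniform hypergraphs is hereditary. Applying \Cref{thm:main} now gives, for every $\epsilon>0$ and all $d(H)=d\ge d_0$,
\[
\alpha(H)\ge(1-\epsilon)\,c_r\left(\frac{\log d}{d}\right)^{1/r}|V(H)|,
\]
which is the claimed bound.

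There is no real obstacle here. The only points that need care are the arithmetic of the constant $c_r$ and absorbing the factor $(1+\varepsilon)^{-1}$ into the $(1-\delta)$ form required by \Cref{thm:main}.
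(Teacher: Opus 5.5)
Your proposal is correct and follows the paper's proof essentially verbatim. You use that maximum degree $\Delta$ implies $\Delta$-degeneracy, bound $\chi_f$ via \Cref{thm:dhawan-hyper}, pass to $\alpha$ through $\alpha(H)\ge |V(H)|/\chi_f(H)$, note that girth at least $4$ is hereditary, and apply \Cref{thm:main} with $f(x)=c_r(\log x)^{1/r}$; your explicit check of the hypotheses of \Cref{prop:easy-nearly-log} fills in the nearly-logarithmic claim, which the paper asserts without computation.
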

\begin{proof}
An $(r+1)$-uniform hypergraph of maximum degree $\Delta$ is $\Delta$-degenerate. Hence
Theorem~\ref{thm:dhawan-hyper} implies that every $(r+1)$-uniform hypergraph of girth at least
$4$ and maximum degree $\Delta$ satisfies
\[
\chi_f(H)\le
(1+o_\Delta(1))\left(\frac{r+1}{r}\right)
\left(\frac{(r+1)r^2\Delta}{\log\Delta}\right)^{1/r}.
\]
Therefore
\[
\alpha(H)\ge \frac{|V(H)|}{\chi_f(H)}
\ge
(1-o_\Delta(1))
\left(\frac{r}{r+1}\right)
\left(\frac{\log\Delta}{(r+1)r^2\Delta}\right)^{1/r}
|V(H)|.
\]
The class of $(r+1)$-uniform hypergraphs of girth at least $4$ is hereditary. Moreover,
\[
f(x)=
\left(\frac{r}{r+1}\right)
\left(\frac{\log x}{(r+1)r^2}\right)^{1/r}
\]
is nearly logarithmic. Applying Theorem~\ref{thm:main} gives
    $$\alpha(H)\geq (1-o_d(1)) \dfrac{r^{\frac{r-2}{r}}}{(r+1)^{\frac{r+1}{r}} } \left(\dfrac{\log d}{d}\right)^{\frac{1}{r}}|V(H)|.$$
\end{proof}

\section{Proof of \Cref{thm:main}}\label{sec:proof}
The proof is a cleaning argument. We first choose parameters so that the nearly logarithmic function changes little on the degree ranges produced by the cleaning process. We then repeatedly delete vertices whose degree is larger than a fixed multiple of the current average degree, and analyze the three possible stopping conditions.
\begin{proof}[Proof of \Cref{thm:main}]
Let $\varepsilon_0>0$ be fixed. Choose $\eps\in(0,1/2)$ sufficiently small so that
\[
(1-\eps)^2(1+\eps)^{-1/r}\ge 1-\varepsilon_0.
\tag{3.1}\label{eq:eps-choice}
\]

We first choose the auxiliary parameters. Applying Definition~\ref{def:nearly-log} with
$m=0$ and error parameter $\eps$, we obtain $M_0>0$ and $\lambda_0>1$ such that for every
$x\ge M_0$,
\[
\inf_{y\in[x, \lambda_0x]}\frac{f(y)}{f(x)}\ge 1-\eps.
\tag{3.2}\label{eq:right-window}
\]
Choose
\[
0<\eta<\min\left\{\eps,\lambda_0-1,\frac{r}{r+1}\right\}.
\]
Set
\[
    c\defeq \frac{(r+1)\eta}{r},\qquad
    a\defeq \dfrac{2}{c}+1,\qquad
    b\defeq 4r+\dfrac{2r}{c}.
    \] 
    Then 
\[
0<c<1,\qquad ac-c=2,\qquad b/r-a=3.
\tag{P1}\label{eq:P1}
\]

Next, applying Definition~\ref{def:nearly-log} with $m=b$ and error parameter $\eps$, we obtain
$M_1>0$ and $\lambda_1>1$ such that for every $x\ge M_1$,
\[
\inf_{y\in[xf(x)^{-b}, \lambda_1x]}\frac{f(y)}{f(x)}\ge 1-\eps.
\]
In particular,
\[
\inf_{y\in[xf(x)^{-b}, x]}\frac{f(y)}{f(x)}\ge 1-\eps.
\tag{3.3}\label{eq:left-window-short}
\]

Since $f(x)=x^{o(1)}$ and $\liminf_{x\to\infty}f(x)>2$, after increasing
$M_0$ and $M_1$ if necessary, we may assume that for every
\[
x\ge M_\ast\defeq \max\{M_0,M_1\},
\]
we have
\[
2<f(x)<x^{1/b}\le x^{1/r}
\qquad\text{and}\qquad
4f(x)^{a+1}<x^{1/r}.
\tag{P2}\label{eq:P2}
\]
Moreover, since $1+\eta\le \lambda_0$, \eqref{eq:right-window} gives
\[
\inf_{y\in[x,(1+\eta)x]}\frac{f(y)}{f(x)}\ge 1-\eps
\tag{P3a}\label{eq:P3a}
\]
for every $x\ge M_\ast$, while \eqref{eq:left-window-short} gives
\[
\inf_{y\in[xf(x)^{-b},x]}\frac{f(y)}{f(x)}\ge 1-\eps
\tag{P3b}\label{eq:P3b}
\]
for every $x\ge M_\ast$.

By the maximum-degree hypothesis of the theorem, there exists $\Delta_0=\Delta_0(\eps)$ such that
every hypergraph $F\in\G$ with $\Delta(F)\ge\Delta_0$ satisfies
\[
\alpha(F)\ge
(1-\eps)\frac{f(\Delta(F))}{\Delta(F)^{1/r}}|V(F)|.
\tag{3.4}\label{eq:max-input}
\]
Since $f(x)=x^{o(1)}$, we have
\[
\frac{x}{f(x)^b}\to\infty.
\]
Therefore, after increasing $M_\ast$ if necessary, we may also assume that
\[
\frac{x}{f(x)^b}\ge\Delta_0
\qquad\text{for every }x\ge M_\ast.
\tag{P4}\label{eq:P4}
\]
Set $M:=M_\ast$.    

    Let $H\in\G$ be an $(r+1)$-uniform hypergraph with $n\defeq |V(H)|$ and average degree $d\defeq d(H)>M$.
    For all $i\geq 0$, set
    \[
    U_0\defeq V(H),\qquad H_i\defeq H[U_i],
    \]
    and write
    \[
    N_i\defeq |U_i|,\qquad D_i\defeq d(H_i),\qquad \Delta_i\defeq \Delta(H_i).
    \]
    At stage $i$, stop if one of the following three conditions holds:
\begin{enumerate}[label=\ep{\normalfont{}S\arabic*}]
    \item \label{item:S1} $D_i<\dfrac{d}{f(d)^b}$,
    \item \label{item:S2}  $\dfrac{n}{N_i}>f(d)^a$,
    \item \label{item:S3} $\Delta_i\le (1+\eta)D_i$.
\end{enumerate}

    If none of \ref{item:S1}--\ref{item:S3} holds, then there exists a vertex $v_i\in U_i$ such that
    \[
    \deg_{H_i}(v_i)>(1+\eta)D_i.
    \]
    Define
    \[
   U_{i+1}\defeq U_i\setminus\{v_i\},\qquad H_{i+1}\defeq H[U_{i+1}].
    \]
    Since $\G$ is hereditary, $H_{i+1}\in\G$.  The procedure terminates after finitely many steps; let $T$ be the stopping time.

\begin{lemma}\label{lem:QT-gain}
    For $0\leq i\leq T$, define
    \[
    Q_i\defeq \frac{N_i}{D_i^{1/r}}.
    \]
    with the convention that $Q_i=+\infty$ if $D_i=0$. Then \[
    Q_T\ge\left(\frac{n+1}{N_T+1}\right)^c\frac{n}{d^{1/r}}\geq \frac{n}{d^{1/r}}.\]
\end{lemma}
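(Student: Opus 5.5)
The plan is to prove a one-step multiplicative gain and then telescope. Since $Q_0=n/d^{1/r}$ and $N_{i+1}=N_i-1$, we have
\[
\prod_{i=0}^{T-1}\frac{N_i+1}{N_{i+1}+1}=\frac{n+1}{N_T+1}.
\]
So it suffices to show that for every $0\le i<T$,
\[
Q_{i+1}\ \ge\ \Bigl(\frac{N_i+1}{N_i}\Bigr)^{c}Q_i .
\tag{$\ast$}
\]
Iterating $(\ast$) gives the first inequality. The second is immediate because $N_T\le n$, so $(n+1)/(N_T+1)\ge 1$. If some $D_i=0$ with $i<T$, then the procedure could not continue, since no vertex has degree $>(1+\eta)\cdot 0$ in an edgeless hypergraph. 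If $D_{i+1}=0$, then $(\ast)$ holds trivially by the convention $Q_{i+1}=+\infty$. So we may assume all relevant $D$'s are positive.

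To prove $(\ast)$, we count edges. Write $e_i=|E(H_i)|=N_iD_i/(r+1)$. Deleting $v_i$ removes exactly $\deg_{H_i}(v_i)>(1+\eta)D_i$ edges. Hence
\[
D_{i+1}=\frac{(r+1)e_{i+1}}{N_i-1}\le \frac{(r+1)\bigl(e_i-(1+\eta)D_i\bigr)}{N_i-1}=D_i\,\frac{N_i-(r+1)(1+\eta)}{N_i-1}.
\]
Here $N_i-(r+1)(1+\eta)>0$, since $e_{i+1}\ge 0$ and $D_{i+1}>0$. Put $x=1/N_i$ and $y=(r+1)(1+\eta)x<1$. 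We get
\[
\Bigl(\frac{Q_{i+1}}{Q_i}\Bigr)^{r}=\Bigl(\frac{N_i-1}{N_i}\Bigr)^{r}\frac{D_i}{D_{i+1}}\ \ge\ \frac{(1-x)^{r+1}}{1-y}.
\]
Using $rc=(r+1)\eta$, the claim $(\ast)$ therefore reduces to the one-variable inequality
\[
(r+1)\log(1-x)-\log\bigl(1-(r+1)(1+\eta)x\bigr)\ \ge\ (r+1)\eta\log(1+x)
\]
for $0<x<1/((r+1)(1+\eta))$.

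The main obstacle is this elementary inequality. Both sides agree to first order: each is $(r+1)\eta x+O(x^2)$. So the proof must come from the second-order terms, together with control of all higher-order terms. My first attempt will expand in power series. Since $-\log(1-y)=\sum_k y^k/k$ and $-\log(1-x)=\sum_k x^k/k$, the left side equals
\[
\sum_{k\ge1}\frac{\bigl((r+1)(1+\eta)\bigr)^k-(r+1)}{k}\,x^k .
\]
The right side is at most $(r+1)\eta\bigl(x-x^2/2+x^3/3\bigr)$, and more simply at most $(r+1)\eta x$. The $k=1$ coefficient is exactly $(r+1)\eta$. For every $k\ge2$, the coefficient is at least $\bigl((r+1)^k-(r+1)\bigr)/k\ge0$. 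Thus every higher-order term on the left is nonnegative, and the left side is at least $(r+1)\eta x\ge(r+1)\eta\log(1+x)$. This settles the inequality with no smallness assumptions beyond the range of $x$. If that bookkeeping works as expected, the remaining work is to check the degenerate cases ($D_{i+1}=0$, or termination at $T=0$) against the convention in the statement.
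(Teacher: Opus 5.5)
Your proof is correct and follows the paper's argument: both use the same edge-counting bound $D_{i+1}\le D_i\,\frac{N_i-(r+1)(1+\eta)}{N_i-1}$ for a single cleaning step and then telescope over the steps. The only difference is in the final elementary inequality. The paper applies Bernoulli's inequality to get the per-step factor $1+c/N_i$ and then needs $c\le 1$ to pass to $(1+1/j)^c$, whereas your power-series comparison of logarithms gives $(1+1/N_i)^c$ directly and does not use $c\le 1$.
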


\begin{proof}
    We claim that whenever stage $i$ is a genuine cleaning step, one has
    \[
    Q_{i+1}\ge \left(1+\frac{c}{N_i}\right)Q_i.
    \tag{3.5}\label{eq:single-step-gain}
    \]
If $D_{i+1}=0$, then the claim is immediate from the convention above. Thus assume $D_{i+1}>0$.

    Since $H_i$ is $(r+1)$-uniform,
    \[
    e(H_i)=\frac{N_iD_i}{r+1}.
    \]
The hypergraph $H_{i+1}$ is obtained from $H_i$ by deleting $v_i$ and then taking the induced subhypergraph on the remaining vertices. Hence all hyperedges containing $v_i$ disappear, and
    \[
    e(H_{i+1})=e(H_i)-\deg_{H_i}(v_i)<\frac{N_iD_i}{r+1}-(1+\eta)D_i.
\]
Therefore
\[
D_{i+1}=\frac{(r+1)e(H_{i+1})}{N_i-1}<\frac{N_i-(r+1)(1+\eta)}{N_i-1}\,D_i.
\]
It follows that
\[
Q_{i+1}=\frac{N_i-1}{D_{i+1}^{1/r}}>\frac{N_i-1}{D_i^{1/r}}\left(1-\frac{r+(r+1)\eta}{N_i-1}\right)^{-1/r}.
\]
The term in parentheses is positive, since
\[
\deg_{H_i}(v_i)\le e(H_i)=\frac{N_iD_i}{r+1}
\]
and $\deg_{H_i}(v_i)>(1+\eta)D_i$ imply $N_i>(r+1)(1+\eta)$.
Using $(1-x)^{-1/r}\ge 1+\frac{x}{r}$ for $0\le x<1$,
we obtain
\[
Q_{i+1}\ge\frac{N_i-1}{D_i^{1/r}}\left(1+\frac{r+(r+1)\eta}{r(N_i-1)}\right)=\left(1+\frac{(r+1)\eta}{rN_i}\right)\frac{N_i}{D_i^{1/r}}=\left(1+\frac{c}{N_i}\right)Q_i.
\]
This proves \eqref{eq:single-step-gain}.

Since each cleaning step removes exactly one vertex, the values $N_i$ encountered before stopping are
\[
n,n-1,n-2,\dots,N_T+1.
\]
Multiplying \eqref{eq:single-step-gain} over all cleaning steps, we obtain
\[
Q_T\ge\prod_{j=N_T+1}^{n}\Bigl(1+\frac{c}{j}\Bigr)\frac{n}{d^{1/r}}.
\]
Because $0<c\le 1$, we have
$1+c/j\ge (1+1/j)^c$,
and therefore
\[
Q_T\ge\prod_{j=N_T+1}^{n}\left(1+\frac{1}{j}\right)^c\frac{n}{d^{1/r}}=\left(\frac{n+1}{N_T+1}\right)^c\frac{n}{d^{1/r}}\ge \frac{n}{d^{1/r}}.\qedhere
\]
\end{proof}

\begin{lemma}\label{lem:crude-bound}
    For any $(r+1)$-uniform hypergraph $F$,
    \[
    \alpha(F)\ge \frac{r}{r+1}|V(F)|\min\{1,d(F)^{-1/r}\}.
    \]
\end{lemma}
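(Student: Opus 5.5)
We prove Lemma~\ref{lem:crude-bound} by the standard random deletion argument, i.e.\ the alteration method. Write $N=|V(F)|$ and $D=d(F)$, so $e(F)=ND/(r+1)$. If $D\le 1$, we sample with probability $p=1$; otherwise we take $p=D^{-1/r}\in(0,1]$. In both cases $p=\min\{1,D^{-1/r}\}$.

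The plan has three steps.
\begin{enumerate}
\item Choose a random set $S\subseteq V(F)$ by including each vertex independently with probability $p$. Then $\mathbb{E}|S|=pN$. A fixed hyperedge lies entirely in $S$ with probability $p^{r+1}$, so the expected number of hyperedges inside $S$ is $p^{r+1}ND/(r+1)$.
\item For every hyperedge contained in $S$, delete one of its vertices. What remains is independent, so
\[
\alpha(F)\ge \mathbb{E}\bigl[|S|-e(F[S])\bigr]=pN\Bigl(1-\frac{p^{r}D}{r+1}\Bigr).
\]
\item Evaluate this bound in the two cases. If $D>1$, then $p^rD=1$ and the bound becomes $pN\cdot\frac{r}{r+1}$. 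If $D\le1$, then $p=1$ and $1-\frac{D}{r+1}\ge 1-\frac1{r+1}=\frac{r}{r+1}$. Either way we get $\frac{r}{r+1}N\min\{1,D^{-1/r}\}$, which is the claim.
\end{enumerate}

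There is no real obstacle here; the only care needed is the case split at $D=1$, so that $p\le1$ and the factor $1-p^rD/(r+1)$ is handled uniformly. (The choice $p=D^{-1/r}$ is not the optimizer, but it gives exactly the stated constant $r/(r+1)$.)
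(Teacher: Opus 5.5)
Your proof is correct and is the same random-deletion argument the paper uses with $p=\min\{1,d(F)^{-1/r}\}$; the only cosmetic difference is that the paper avoids the case split by noting $p^r d(F)\le 1$ in one line. One correction to your closing aside: $p=D^{-1/r}$ is in fact the maximizer of $pN-p^{r+1}ND/(r+1)$ when $D\ge 1$, since the derivative $N-p^rND$ vanishes there, so this choice is optimal for the alteration bound rather than merely sufficient.
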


\begin{proof}
Let
\[
p\defeq \min\{1,d(F)^{-1/r}\}.
\]
Choose each vertex of $F$ independently with probability $p$, and let $S$ be the resulting random set. Then
\[
\mathbb E|S|=p|V(F)|
\]
and, since $F$ is $(r+1)$-uniform,
\[
\mathbb E e(F[S])=p^{r+1}e(F)=p^{r+1}\frac{|V(F)|d(F)}{r+1}\le \frac{p|V(F)|}{r+1}.
\]
Deleting one vertex from each edge of $F[S]$ leaves an independent set. Hence
\[
\alpha(F)\ge \mathbb E|S|-\mathbb E e(F[S])\ge p|V(F)|-\frac{p|V(F)|}{r+1}=\frac{r}{r+1}p|V(F)|.\qedhere
\]
\end{proof}

We distinguish three cases according to the stopping condition.

\smallskip
\noindent
\emph{Case 1: \ref{item:S1} holds.}
Thus
\[
D_T<\frac{d}{f(d)^b}.
\]
Since $D_0=d$, condition \ref{item:S1} cannot hold at time $0$, so $T\ge 1$. Hence \ref{item:S2} failed at time $T-1$, which means
\[
\frac{n}{N_{T-1}}\le f(d)^a.
\]
Since $N_T+1=N_{T-1}$, it follows that
\[
N_T+1\ge \frac{n}{f(d)^a}.
\]
Also, since $H$ is $(r+1)$-uniform, we have $d\le \binom{n-1}{r}<n^r$. Together with \eqref{eq:P2}, this gives
\[
n>d^{1/r}>4f(d)^{a+1}>2f(d)^a.
\]
Consequently,
\[
N_T\ge \frac{n}{2f(d)^a}.
\tag{3.6}\label{eq:NT-lower}
\]

If $D_T\ge 1$, then by Lemma \ref{lem:crude-bound},
\[
\alpha(H)\ge \alpha(H_T)\ge \frac{r}{r+1}\frac{N_T}{D_T^{1/r}}\ge\frac{r}{2(r+1)}\frac{n}{f(d)^a}\left(\frac{f(d)^b}{d}\right)^{1/r}=\frac{r}{2(r+1)}f(d)^{b/r-1-a}  \dfrac{f(d)}{d^{1/r}}n.
\]
By \eqref{eq:P1}, $b/r-1-a = 2$. Since $f(d)\geq 2$, we have \[\frac{r}{2(r+1)}f(d)^{b/r-1-a} = \dfrac{r}{2(r+1)}f(d)^2\geq 1 \quad \text{ and hence } \quad\alpha(H)\ge  \dfrac{f(d)}{d^{1/r}}n.\]

If instead $D_T<1$, then by \Cref{lem:crude-bound} and \eqref{eq:NT-lower},
\[
\alpha(H)\ge \alpha(H_T)\ge \frac{r}{r+1}N_T\ge \frac{r}{2(r+1)}\frac{n}{f(d)^a}=\frac{r}{2(r+1)}\dfrac{d^{1/r}}{f(d)^{a+1}} \frac{f(d)}{d^{1/r}}n.
\]
Again by \eqref{eq:P2} with $x=d$,
\[
4f(d)^{a+1}<d^{1/r}.
\]
Therefore
\[
\frac{r}{2(r+1)}
\frac{d^{1/r}}{f(d)^{a+1}}
>
\frac{2r}{r+1}
\ge1,
\]
and so
\[
\alpha(H)\ge \frac{f(d)}{d^{1/r}}n.
\]
\smallskip

\noindent
\emph{Case 2: \ref{item:S2} holds and \ref{item:S1} does not.}
Then
\[
\frac{n}{N_T}>f(d)^a \qquad\text{and}\qquad D_T\ge \frac{d}{f(d)^b}>1.
\]
By Lemma \ref{lem:QT-gain} and Lemma \ref{lem:crude-bound}, we get
\[
\alpha(H)\ge \alpha(H_T)\ge \frac{r}{r+1}\frac{N_T}{D_T^{1/r}}\ge\frac{r}{r+1}\left(\frac{n+1}{N_T+1}\right)^c\frac{n}{d^{1/r}}.
\]
Since
\[
\frac{n+1}{N_T+1}\ge \frac{n}{2N_T}>\frac12f(d)^a,
\]
we get
\[
\alpha(H)\ge\frac{r}{r+1}\,2^{-c}f(d)^{ac}\frac{n}{d^{1/r}}=\frac{r}{r+1}\,2^{-c}f(d)^{ac-1}\frac{f(d)}{d^{1/r}}n.
\]
By \eqref{eq:P1}, $ac-1 = c+1$. Again, since $f(d) > 2$, we have
$$ \frac{r}{r+1}\,2^{-c}f(d)^{ac-1} = \frac{r}{r+1} f(d) \,2^{-c}f(d)^{c}\geq 1\quad \text{ and hence }\quad \alpha(H)\ge \frac{f(d)}{d^{1/r}}n.$$
\smallskip

\noindent
\emph{Case 3: \ref{item:S3} holds and \ref{item:S1}, \ref{item:S2} do not.}
    Then
    \[
    \dfrac{d}{f(d)^b}\leq D_T\leq \Delta_T\le (1+\eta)D_T\leq (1+\eta) d.
    \]
    Since $\G$ is hereditary, $H_T\in \G$. Moreover, by \eqref{eq:P4},
\[
\Delta_T\ge D_T\ge \frac{d}{f(d)^b}\ge \Delta_0.
\]
Applying \eqref{eq:max-input} to $H_T$ gives
\[
\alpha(H_T)
\ge
(1-\eps)\frac{f(\Delta_T)}{\Delta_T^{1/r}}N_T.
\]
Now
\[
\Delta_T\in\left[\frac{d}{f(d)^b},(1+\eta)d\right].
\]
If $\Delta_T\le d$, then \eqref{eq:P3b} gives
\[
f(\Delta_T)\ge (1-\eps)f(d).
\]
If $\Delta_T\ge d$, then \eqref{eq:P3a} gives the same inequality, since $\Delta_T\le(1+\eta)d$. Thus, in all cases,
\[
f(\Delta_T)\ge (1-\eps)f(d).
\]
Also,
\[
\Delta_T^{1/r}\le ((1+\eta)D_T)^{1/r}.
\]
Therefore
\[
\alpha(H_T)
\ge
(1-\eps)^2(1+\eta)^{-1/r}
f(d)\frac{N_T}{D_T^{1/r}}.
\]
    By Lemma \ref{lem:QT-gain}, $$\dfrac{N_T}{D_T^{1/r}}\geq \dfrac{n}{d^{1/r}}.$$
    Hence
\[
\alpha(H)
\ge
\alpha(H_T)
\ge
(1-\eps)^2(1+\eta)^{-1/r}
\frac{f(d)}{d^{1/r}}n.
\]
Since $\eta<\eps$,
\[
(1+\eta)^{-1/r}\ge(1+\eps)^{-1/r}.
\]
By \eqref{eq:eps-choice},
\[
\alpha(H)\ge
(1-\varepsilon_0)\frac{f(d)}{d^{1/r}}n.
\]
Since $\varepsilon_0>0$ was arbitrary, the theorem follows.
\end{proof}

\begin{remk}[Weighted variant]
The same cleaning argument gives a weighted version of Theorem~\ref{thm:main}.
Let $H$ be an $(r+1)$-uniform hypergraph and let
$w:V(H)\to\mathbb R_{>0}$. Define
\[
\lambda_H^w(v):=
\frac{1}{w(v)^r}\sum_{e\ni v}\prod_{u\in e\setminus\{v\}}w(u),
\qquad
\Delta_w(H):=\max_{v\in V(H)}\lambda_H^w(v),
\]
and
\[
\mu_w(U):=\sum_{u\in U}w(u)^{r+1},
\qquad
d_w(H):=
\frac{(r+1)\sum_{e\in E(H)}\prod_{u\in e}w(u)}
{\mu_w(V(H))}.
\]
Also let
\[
\alpha_w^\ast(H):=
\max\left\{\sum_{v\in I}w(v)^{r+1}: I\subseteq V(H)\text{ is independent}\right\}.
\]
If the maximum-degree hypothesis in Theorem~\ref{thm:main} holds in the corresponding weighted
form with $\Delta_w$ and $\alpha_w^\ast$, then the same conclusion holds with $d_w$ and
$\mu_w(V(H))$ in place of $d$ and $|V(H)|$. The proof is identical after replacing $N_i$ by
$\mu_w(U_i)$ and the edge count by the weighted edge-mass.
\end{remk}

\section*{Acknowledgment}
We thank Abhishek Dhawan for helpful comments on an earlier version of this manuscript.

\printbibliography

@article {Ajtai1980Ramsy,
    AUTHOR = {Ajtai, Mikl\'os and Koml\'os, J\'anos and Szemer\'edi, Endre},
     TITLE = {A note on {R}amsey numbers},
   JOURNAL = {J. Combin. Theory Ser. A},
  FJOURNAL = {Journal of Combinatorial Theory. Series A},
    VOLUME = {29},
      YEAR = {1980},
    NUMBER = {3},
     PAGES = {354--360},
      ISSN = {0097-3165,1096-0899},
   MRCLASS = {05C55 (05C35)},
  MRNUMBER = {600598},
MRREVIEWER = {J.\ E.\ Graver},
       DOI = {10.1016/0097-3165(80)90030-8},
       URL = {https://doi.org/10.1016/0097-3165(80)90030-8},
}

@article {Shearer1983ind1,
    AUTHOR = {Shearer, James B.},
     TITLE = {A note on the independence number of triangle-free graphs},
   JOURNAL = {Discrete Math.},
  FJOURNAL = {Discrete Mathematics},
    VOLUME = {46},
      YEAR = {1983},
    NUMBER = {1},
     PAGES = {83--87},
      ISSN = {0012-365X,1872-681X},
   MRCLASS = {05C99},
  MRNUMBER = {708165},
MRREVIEWER = {Linda\ Lesniak},
       DOI = {10.1016/0012-365X(83)90273-X},
       URL = {https://doi.org/10.1016/0012-365X(83)90273-X},
}

@article {Anton2023ForbidBipartite,
    AUTHOR = {Anderson, James and Bernshteyn, Anton and Dhawan, Abhishek},
     TITLE = {Colouring graphs with forbidden bipartite subgraphs},
   JOURNAL = {Combin. Probab. Comput.},
  FJOURNAL = {Combinatorics, Probability and Computing},
    VOLUME = {32},
      YEAR = {2023},
    NUMBER = {1},
     PAGES = {45--67},
      ISSN = {0963-5483,1469-2163},
   MRCLASS = {05C15},
  MRNUMBER = {4523856},
MRREVIEWER = {Deming\ Li},
       DOI = {10.1017/s0963548322000104},
       URL = {https://doi.org/10.1017/s0963548322000104},
}

@article {Spencer1982hyper,
    AUTHOR = {Ajtai, M. and Koml\'os, J. and Pintz, J. and Spencer, J. and
              Szemer\'edi, E.},
     TITLE = {Extremal uncrowded hypergraphs},
   JOURNAL = {J. Combin. Theory Ser. A},
  FJOURNAL = {Journal of Combinatorial Theory. Series A},
    VOLUME = {32},
      YEAR = {1982},
    NUMBER = {3},
     PAGES = {321--335},
      ISSN = {0097-3165,1096-0899},
   MRCLASS = {05C65},
  MRNUMBER = {657047},
MRREVIEWER = {F.\ Sterboul},
       DOI = {10.1016/0097-3165(82)90049-8},
       URL = {https://doi.org/10.1016/0097-3165(82)90049-8},
}

@inproceedings {Rodl1995hyper,
    AUTHOR = {Duke, Richard A. and Lefmann, Hanno and R\"odl, Vojt\v ech},
     TITLE = {On uncrowded hypergraphs},
 BOOKTITLE = {Proceedings of the {S}ixth {I}nternational {S}eminar on
              {R}andom {G}raphs and {P}robabilistic {M}ethods in
              {C}ombinatorics and {C}omputer {S}cience, ``{R}andom {G}raphs
              '93'' ({P}ozna\'n, 1993)},
   JOURNAL = {Random Structures Algorithms},
  FJOURNAL = {Random Structures \& Algorithms},
    VOLUME = {6},
      YEAR = {1995},
    NUMBER = {2-3},
     PAGES = {209--212},
      ISSN = {1042-9832,1098-2418},
   MRCLASS = {05C65 (05C80)},
  MRNUMBER = {1370956},
       DOI = {10.1002/rsa.3240060208},
       URL = {https://doi.org/10.1002/rsa.3240060208},
}

@article {Verstraete2026hyper,
    AUTHOR = {Verstraete, Jacques and Wilson, Chase},
     TITLE = {Independent sets in hypergraphs},
   JOURNAL = {Random Structures Algorithms},
  FJOURNAL = {Random Structures \& Algorithms},
    VOLUME = {68},
      YEAR = {2026},
    NUMBER = {1},
     PAGES = {Paper No. e70047, 9},
      ISSN = {1042-9832,1098-2418},
   MRCLASS = {05C69 (05C65)},
  MRNUMBER = {5022514},
       DOI = {10.1002/rsa.70047},
       URL = {https://doi.org/10.1002/rsa.70047},
}

@unpublished{Abhi2025Kttt,
	author = {Abhishek Dhawan and Oliver Janzer and Abhishek Methuku},
	title = {Independent sets and colorings of $K_{t,t,t}$-free graphs},
	howpublished = {\url{https://arxiv.org/abs/2511.17191} (preprint)},
	date = {2025},
}

@unpublished{Dhawan2026entropy,
	author = {Abhishek Dhawan},
	title = {Fractional coloring via entropy},
	howpublished = {\url{https://arxiv.org/abs/2603.17730} (preprint)},
	date = {2026},
}

@unpublished{DKPS2020,
	 title={Graph structure via local occupancy},
  author={Davies, Ewan and Kang, Ross J and Pirot, Fran{\c{c}}ois and Sereni, Jean-S{\'e}bastien},
	howpublished = {\url{https://arxiv.org/abs/2003.14361} (preprint)},
	date = {2020},
}

@article {alon1999coloring,
    AUTHOR = {Alon, Noga and Krivelevich, Michael and Sudakov, Benny},
     TITLE = {Coloring graphs with sparse neighborhoods},
   JOURNAL = {J. Combin. Theory Ser. B},
  FJOURNAL = {Journal of Combinatorial Theory. Series B},
    VOLUME = {77},
      YEAR = {1999},
    NUMBER = {1},
     PAGES = {73--82},
      ISSN = {0095-8956,1096-0902},
   MRCLASS = {05C15},
  MRNUMBER = {1710532},
MRREVIEWER = {David\ E.\ Woolbright},
       DOI = {10.1006/jctb.1999.1910},
       URL = {https://doi.org/10.1006/jctb.1999.1910},
}
\end{document}